\theoremstyle{plain}
\newtheorem{prop}{Proposition}[section]
\newtheorem{thm}[prop]{Theorem}
\newtheorem{cor}[prop]{Corollary}
\newtheorem{lemma}[prop]{Lemma}
\theoremstyle{definition}
\newtheorem{dfn}[prop]{Definition}
\newtheorem{remark}[prop]{Remark}
\newtheorem{example}[prop]{Example}
\newcommand{\N}{\mathbb N}
\newcommand{\Z}{\mathbb Z}
\newcommand{\R}{\mathbb R}
\newcommand{\C}{\mathbb C}
\newcommand{\HH}{\mathbb H}
\newcommand{\KK}{\mathbb K}
\newcommand{\ii}{\mathbbm i}
\newcommand{\jj}{\mathbbm j}
\newcommand{\kk}{\mathbbm k}
\newcommand{\axy}{\langle X,Y\rangle} 
\newcommand{\ax}{\langle {\ushort {X}}\rangle} 
\newcommand{\csim}{\stackrel{\mathrm{cyc}}{\thicksim}}
\newcommand{\ncsim}{\stackrel{\mathrm{cyc}}{\nsim}}
\DeclareMathOperator{\tr}{tr}
\DeclareMathOperator{\Tr}{Tr}
\DeclareMathOperator{\Trd}{Trd}
\DeclareMathOperator{\ran}{Ran}
\DeclareMathOperator{\sym}{Sym}
\DeclareMathOperator{\rank}{rank}
\DeclareMathOperator{\inte}{int}
\DeclareMathOperator{\spann}{span}
\newcommand{\mato}{\left(\begin{smallmatrix}}
\newcommand{\matc}{\end{smallmatrix}\right)}
\renewcommand{\emptyset}{\varnothing}
\renewcommand{\setminus}{\smallsetminus}
\newcommand{\eps}{\varepsilon}
\begin{document}
\title{The truncated tracial moment problem}
\author[Sabine Burgdorf]{Sabine Burgdorf}
\address{Sabine Burgdorf, Universit\"at Konstanz, 
Fachbereich Mathematik und Sta\-tistik, 78457 Konstanz, Germany \\ \and
Institut de Recherche Math\'ematique de Rennes, Universit\'e de Rennes 1,
Campus de Beaulieu, 35042 Rennes cedex, France} 
\email{sabine.burgdorf@uni-konstanz.de}
\thanks{Both authors were supported by the French-Slovene partnership project Proteus 20208ZM.
The first author was partially supported by the Zukunftskolleg Konstanz.
 The second author was partially supported by the Slovenian Research Agency (program no. 1-0222).
}
\author[Igor Klep]{Igor Klep}
\address{Igor Klep,
Univerza v Ljubljani, Fakulteta za matematiko in fiziko,
Jadranska 19, 1111 Ljubljana \\ and
Univerza v Mariboru, Fakulteta za naravoslovje in
matematiko, Koro\v ska 160, 2000 Maribor, Slovenia}
\email{igor.klep@fmf.uni-lj.si}

\date{16 January 2010}
\subjclass[2010]{Primary 47A57, 15A45, 13J30; Secondary 08B20, 11E25, 44A60}
\keywords{(truncated) moment problem, non-commutative polynomial,
sum of hermitian squares, moment matrix, free positivity}

\numberwithin{equation}{section}

\begin{abstract}
We present \textit{tracial} analogs of the
classical results of Curto and Fialkow on moment matrices.
A sequence of real numbers indexed by words in non-commuting variables
with values invariant under cyclic permutations of the indexes,
is called a \textit{tracial sequence}. We prove
that such a sequence can be represented with tracial moments of matrices
if its corresponding moment matrix is positive semidefinite and of finite
rank. A \emph{truncated} tracial sequence allows for such a representation
if and only if one of its extensions admits a flat extension. Finally, we apply
this theory via duality to investigate trace-positive polynomials in non-commuting variables.
\end{abstract}

\maketitle
\vspace{-2pt}
\section{Introduction}

The moment problem is a classical question in analysis, well studied because of its 
importance and variety of applications. A simple example is the (univariate) Hamburger 
moment problem: when does a given sequence of real numbers represent the successive
moments $\int\! x^n\, d\mu(x)$ of a positive Borel measure $\mu$ on $\R$?
Equivalently, which linear functionals $L$ on univariate real polynomials are
integration with respect to some $\mu$? By Haviland's theorem \cite{Hav}
this is the case if and only if $L$ is nonnegative on all polynomials nonnegative on 
$\R$. Thus Haviland's theorem relates the moment problem to positive polynomials. It
holds in several variables and also if we are interested in restricting the support of 
$\mu$. For details we refer the reader to one of the many beautiful expositions of this 
classical branch of functional analysis, e.g.~\cite{Akh,KN,ST}.

Since Schm\"udgen's celebrated solution of the moment problem
on compact basic closed semialgebraic sets \cite{Smu},
the moment problem has played a prominent role in real algebra,
exploiting this duality between positive polynomials and the
moment problem, cf.~\cite{KM,PS,Put,PV}.
The survey of Laurent \cite{laurent2} gives a nice presentation of
up-to-date results and applications;
see also \cite{Mar,PD} for more on positive polynomials.

Our main motivation are trace-positive polynomials in non-commuting
variables. A polynomial is called \emph{trace-positive} if all
its matrix evaluations (of \emph{all} sizes) have nonnegative trace.
Trace-positive polynomials have been employed to investigate
problems on 
operator algebras (Connes' embedding conjecture \cite{connes,ksconnes})
and mathematical physics (the Bessis-Moussa-Villani conjecture 
\cite{bmv,ksbmv}), so a good understanding of this set is desired.
By duality this leads us to consider the tracial moment problem
introduced below.
We mention that the free non-commutative moment problem 
has been studied and solved by
McCullough \cite{McC} and Helton \cite{helton}.
Hadwin \cite{had} considered
moments involving traces on von Neumann algebras.

This paper is organized as follows. The short Section \ref{sec:basic}
fixes notation and terminology involving non-commuting variables used in the sequel. 
ection \ref{sec:ttmp} introduces 
tracial moment sequences,
tracial moment matrices,
the tracial moment problem, and their truncated counterparts.
Our main results in this section relate the truncated tracial moment problem
to flat extensions of tracial moment matrices and resemble the 
results of Curto and Fialkow \cite{cffinite,cfflat} on the (classical)
truncated moment problem. For example,
we prove
that a tracial sequence can be represented with tracial moments of 
matrices
if its corresponding tracial moment matrix is positive semidefinite and of finite
rank (Theorem \ref{thm:finiterank}). 
A truncated tracial sequence allows for such a representation
if and only if one if its extensions admits a flat extension (Corollary
\ref{cor:flatt}).
Finally, in Section \ref{sec:poly} we 
explore the duality
between the tracial moment problem and trace-positivity of polynomials.
Throughout the paper several examples are given
to illustrate the theory.

\section{Basic notions}\label{sec:basic}

Let $\R\ax$ denote the unital associative $\R$-algebra freely generated 
by $\ushort X=(X_1,\dots,X_n)$. The elements of $\R\ax$ are polynomials in the non-commuting 
variables $X_1,\dots,X_n$ with coefficients in $\R$. 
An element $w$ of the monoid $\ax$, freely generated by $\ushort X$, 
is called a \textit{word}. An element of the form $aw$, where $0\neq a\in\R$ 
and  $w\in\ax$, is called a \textit{monomial} and $a$ its \textit{coefficient}.
We endow $\R\ax$ with the \textit{involution} $p\mapsto p^*$ fixing $\R\cup\{\ushort X\}$ 
pointwise. Hence for each word $w\in\ax$, $w^*$ is its reverse. As an example, we have 
$(X_1X_2^2-X_2X_1)^*=X_2^2X_1-X_1X_2$. 

For $f\in\R\ax$ we will substitute symmetric matrices
$\ushort A=(A_1,\dots A_n)$ of the same size for the variables $\ushort X$ 
and obtain a matrix $f(\ushort A)$. Since $f(\ushort A)$ is
not well-defined if the $A_i$ do not have the 
same size, we will assume this condition implicitly without further mention in the sequel. 

Let $\sym \R\ax$ denote the set of \emph{symmetric elements} in $\R\ax$, i.e., 
$$\sym \R\ax=\{f\in \R\ax\mid f^*=f\}.$$
Similarly, we use $\sym \R^{t\times t}$ to denote the set of all symmetric $t\times t$ matrices. 

In this paper we will mostly consider the \emph{normalized} trace $\Tr$,
i.e.,
$$\Tr(A)=\frac 1t\tr(A)\quad\text{for } A\in\R^{t\times t}.$$
The invariance of the trace under cyclic permutations motivates the
following definition of cyclic equivalence \cite[p.~1817]{ksconnes}. 

\begin{dfn}
Two polynomials $f,g\in \R\ax$ are \emph{cyclically equivalent}
if $f-g$ is a sum of commutators:
$$f-g=\sum_{i=1}^k(p_iq_i-q_ip_i) \text{ for some } k\in\N
\text{ and } p_i,q_i \in \R\ax.$$
\end{dfn}

\begin{remark}\label{rem:csim}
\mbox{}\par
\begin{enumerate}[(a)]
\item Two words $v,w\in\ax$ are cyclically equivalent if and only if $w$ 
is a cyclic permutation of $v$. 
Equivalently: there exist $u_1,u_2\in\ax$ such that 
$v=u_1u_2$ and $w=u_2u_1$.
\item If $f\csim g$ then $\Tr(f(\ushort A))=\Tr(g(\ushort A))$ for all tuples
$\ushort A$ of symmetric matrices.
Less obvious is the converse: if $\Tr(f(\ushort A))=\Tr(g(\ushort A))$
for all $\ushort A$ and $f-g\in\sym\R\ax$, then $f\csim g$ \cite[Theorem 2.1]{ksconnes}.
\item Although $f\ncsim f^*$ in general, we still have 
$$\Tr(f(\ushort A))=\Tr(f^*(\ushort A))$$
for all $f\in\R \ax$ and all $\ushort A\in (\sym\R^{t\times t})^n$.
\end{enumerate}
\end{remark}

The length of the longest word in a polynomial $f\in\R\ax$ is the
\textit{degree} of $f$ and is denoted by $\deg f$.
We write $\R\ax_{\leq k}$ for the set of all polynomials of degree $\leq k$.

\section{The truncated tracial moment problem}\label{sec:ttmp}

In this section we define tracial (moment) sequences,
tracial moment matrices,
the tracial moment problem, and their truncated analogs.
After a few motivating examples we proceed to show that the 
kernel of a tracial moment matrix has some real-radical-like
properties (Proposition \ref{prop:radical}). 
We then prove that a tracial moment matrix of finite
rank has a tracial moment representation, i.e., the tracial moment problem
for the associated tracial sequence is solvable (Theorem \ref{thm:finiterank}). 
Finally, we give the solution of 
the truncated tracial moment problem: a truncated tracial sequence has
a tracial representation if and only if one of its extensions has a tracial moment matrix that 
admits a flat extension (Corollary \ref{cor:flatt}).

For an overview of the classical (commutative) moment problem in several 
variables we refer 
the reader to Akhiezer \cite{Akh}  (for the analytic theory) and
to the survey of Laurent \cite{laurent} and references therein for a more
algebraic approach.
The standard references on the truncated moment problems are
\cite{cffinite,cfflat}.
For the non-commutative moment problem with \emph{free} (i.e.,
unconstrained) moments see
\cite{McC,helton}.

\begin{dfn}
A sequence of real numbers $(y_w)$ indexed by words $w\in \ax$ satisfying 
\begin{equation}
	y_w=y_u \text{ whenever } w\csim u, \label{cyc}
\end{equation}
\begin{equation}
	y_w=y_{w^*} \text{ for all } w, \label{cycstar}
\end{equation}
and $y_\emptyset=1$, is called a (normalized) \emph{tracial sequence}. 
\end{dfn} 

\begin{example}
Given $t\in\N$ and symmetric matrices $A_1,\dots,A_n\in \sym \R^{t\times t}$,
the sequence given by $$y_w:= \Tr(w(A_1,\dots,A_n))=\frac 1t \tr(w(A_1,\dots,A_n))$$ 
is a tracial sequence since by Remark \ref{rem:csim}, the traces of cyclically 
equivalent words coincide. 
\end{example}

We are interested in the converse of this example (the \emph{tracial moment problem}): 
\emph{For which sequences $(y_w)$ do there exist $N\in \N$, $t\in \N$,
$\lambda_i\in \R_{\geq0}$ with $\sum_i^N \lambda_i=1$ and 
vectors $\ushort A^{(i)}=(A_1^{(i)},\dots,A_n^{(i)})\in (\sym \R^{t\times t})^n$,  such that
\begin{equation}
	y_w=\sum_{i=1}^N \lambda_i \Tr(w(\ushort A^{(i)}))\,? \label{rep}
\end{equation}}
We then say that $(y_w)$ has a \emph{tracial moment representation}
and call it a \emph{tracial moment sequence}.

The \emph{truncated tracial moment problem} is the study of (finite) tracial sequences 
$(y_w)_{\leq k}$ 
where $w$ is constrained by $\deg w\leq k$ for some $k\in\N$,
and properties \eqref{cyc} and \eqref{cycstar} hold for these $w$.
For instance, which sequences $(y_w)_{\leq k}$ have a tracial moment 
representation, i.e., when does there 
exist a representation of the values $y_w$ as in \eqref{rep}  for $\deg w\leq k$? 
If this is the case, then 
the sequence $(y_w)_{\leq k}$ is called a \emph{truncated tracial moment sequence}.

\begin{remark}
\mbox{}\par
\begin{enumerate}[(a)]
\item 
To keep a perfect analogy with the classical moment problem, 
one would need to consider the existence of a positive
Borel measure $\mu$ on $(\sym \R^{t\times t})^n$ (for some
$t\in\N$) satisfying
\begin{equation}\label{eq:gewidmetmarkus}
y_w = \int \! w(\ushort A) \, d\mu(\ushort A).
\end{equation}
As we shall mostly focus on the \emph{truncated}
tracial moment problem in the sequel, the
finitary representations \eqref{rep} seem to be the
proper setting. 
We look forward to studying the more general representations
\eqref{eq:gewidmetmarkus} in the future.
\item
Another natural extension of our tracial moment problem
with respect to matrices would be to consider moments obtained by 
traces in finite \emph{von Neumann algebras} as
done by Hadwin \cite{had}.
However, our
primary motivation were trace-positive polynomials
defined via traces of matrices (see Definition \ref{def:trpos}),
a theme we expand upon in Section \ref{sec:poly}. Understanding these
is one of the approaches to Connes' embedding conjecture \cite{ksconnes}.
The notion dual to that of trace-positive polynomials is
the tracial moment problem as defined above.
\item The tracial moment problem
is a natural extension of the classical quadrature problem
dealing with 
representability via atomic positive measures in
the commutative case. Taking $\ushort a^{(i)}$
consisting of $1\times 1$ matrices $a_j^{(i)}\in\R$ 
for the $\ushort A^{(i)}$ 
in \eqref{rep},  we have
$$y_w=\sum_i \lambda_i w(\ushort a^{(i)})= \int \!x^w \, d\mu(x),$$
where $x^w$ denotes the commutative collapse of $w\in\ax$.
The measure $\mu$ is the convex combination 
$\sum \lambda_i\delta_{\ushort a^{(i)}}$
of the atomic measures $\delta_{\ushort a^{(i)}}$.
\end{enumerate}
\end{remark}

The next example shows that there are (truncated) tracial moment sequences $(y_w)$ 
which
cannot be written as $$y_w=\Tr(w(\ushort A)).$$ 

\begin{example}\label{exconv} 
Let $X$ be a single free (non-commutative) variable.
We take the index set $J=(1,X,X^2,X^3,X^4)$ and $y=(1,1-\sqrt2,1,1-\sqrt2,1)$. Then
$$y_w=\frac{\sqrt2}{2}w(-1)+(1-\frac{\sqrt2}{2})w(1),$$ i.e., 
$\lambda_1=\frac{\sqrt2}{2}$, $\lambda_2=1-\lambda_1$ and $A^{(1)}=-1$, $A^{(2)}=1$. 
But there is no symmetric matrix $A\in \R^{t\times t}$ for any $t\in\N$ such that 
$y_w=\Tr(w(A))$ for all $w\in J$. The proof is given in the appendix.
\end{example}

The (infinite) \emph{tracial moment matrix} $M(y)$ of a tracial 
sequence $y=(y_w)$ is defined by 
$$M(y)=(y_{u^*v})_{u,v}.$$
This matrix is symmetric due to the condition \eqref{cycstar} in the 
definition of a tracial sequence.
A necessary condition for $y$ to be a tracial moment sequence is positive 
semidefiniteness of $M(y)$ which in general is not sufficient.

The tracial moment matrix of \emph{order $k$} is the tracial moment matrix $M_k(y)$ 
indexed by words $u,v$ with $\deg u,\deg v\leq k$.
If $y$ is a truncated tracial moment sequence, then $M_k(y)$ is positive 
semidefinite. Here is an easy example showing the converse is false:

\begin{example}\label{expsd}
When dealing with two variables, we write $(X,Y)$ instead of $(X_1,X_2)$.
Taking the index set 
$$(1,X,Y,X^2,XY,Y^2,X^3,X^2Y,XY^2,Y^3,X^4,X^3Y,X^2Y^2,XYXY,XY^3,Y^4)$$
the truncated moment sequence $$y=(1,0,0,1,1,1,0,0,0,0,4,0,2,1,0,4) $$ yields the 
tracial moment matrix 
$$M_2(y)=\left(\begin{smallmatrix}
	1&0&0&1&1&1&1\\ 0&1&1&0&0&0&0\\ 0&1&1&0&0&0&0\\ 1&0&0&4&0&0&2\\
	1&0&0&0&2&1&0\\ 1&0&0&0&1&2&0\\ 1&0&0&2&0&0&4
\end{smallmatrix}\right)$$
with respect to the basis $(1,X,Y,X^2,XY,YX,Y^2)$. 
$M_2(y)$ is positive semidefinite but $y$ has no tracial representation.
Again, we postpone the proof until the appendix.
\end{example}

For a given polynomial $p=\sum_{w\in \ax} p_w w\in \R \ax$ let $\vv p$ be the
(column) vector of coefficients $p_w$ in a given fixed order.
One can identify $\R \ax_{\leq k}$ with $\R^\eta$
for $\eta=\eta(k)=\dim\R\ax_{\leq k}<\infty$ by sending each $p\in \R \ax_{\leq k}$ to the vector 
$\vv p$ of its entries with $\deg w\leq k$. 
The tracial moment matrix $M(y)$ induces the linear map 
$$\varphi_M:\R\ax\to \R^\N,\quad p\mapsto M\vv p.$$ The tracial moment matrices $M_k(y)$, 
indexed by $w$ with $\deg w\leq k$, can be regarded as linear maps 
$\varphi_{M_k}:\R^\eta\to \R^\eta$, $\vv p\mapsto M_k\vv p$.

\begin{lemma}\label{lem:mk}
Let $M=M(y)$ be a tracial moment matrix. Then the following holds:
\begin{enumerate}[\rm (1)]
\item $p(y):=\sum_w p_w y_w={\vv{1}}^*M\vv{p}$. In particular,
	${\vv{1}}^*M\vv{p}={\vv{1}}^*M\vv{q}$ if $p\csim q$;
\item ${\vv{p}}^*M\vv{q}={\vv{1}}^*M\vv{p^*q}$.
 \end{enumerate}
\end{lemma}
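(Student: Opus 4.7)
The plan is to prove both parts by direct unpacking of the definition $M_{u,v}=y_{u^*v}$, together with the basic properties of a tracial sequence. Since $M$ is defined entry-wise from $y$, everything should reduce to matching coefficients on both sides, and the tracial property \eqref{cyc} will only be needed for the second half of (1).

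For part (1), I would first observe that the vector $\vv{1}$ is simply the standard basis vector $e_\emptyset$ corresponding to the empty word, so $\vv{1}^*M$ extracts the row of $M$ indexed by $\emptyset$. That row has entries $M_{\emptyset,v}=y_{\emptyset^* v}=y_v$, hence $\vv{1}^*M\vv{p}=\sum_v p_v y_v = p(y)$, which is the first claim. For the ``in particular'' part, if $p\csim q$ then $p-q$ is a sum of commutators $p_iq_i-q_ip_i$. Expanding these in the basis of words produces sums of terms of the form $uv - vu$, and by the cyclic equivalence property \eqref{cyc} of a tracial sequence we have $y_{uv}=y_{vu}$. Hence $(p-q)(y)=0$ by linearity, and the identity $\vv{1}^*M\vv{p}=\vv{1}^*M\vv{q}$ follows from the first formula in (1).

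For part (2), I would just compute both sides in coordinates. The left side is
\[
\vv{p}^*M\vv{q}=\sum_{u,v}p_u M_{u,v}q_v=\sum_{u,v}p_u q_v\, y_{u^*v}.
\]
For the right side, since the coefficients are real, $p^*=\sum_u p_u u^*$, so $p^*q=\sum_{u,v}p_u q_v\, u^*v$ and the coefficient of a word $w$ in $p^*q$ is $(p^*q)_w=\sum_{u^*v=w}p_u q_v$. Applying part (1) to $p^*q$ gives
\[
\vv{1}^*M\vv{p^*q}=(p^*q)(y)=\sum_w (p^*q)_w\, y_w=\sum_{u,v}p_u q_v\, y_{u^*v},
\]
which matches the left side.

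There is no real obstacle here; this is a bookkeeping lemma whose sole content is that the tracial moment matrix is the Gram matrix of the bilinear form $(p,q)\mapsto(p^*q)(y)$ on $\R\ax$. The only points requiring care are keeping the involution $^*$ on the polynomial side consistent with the conjugate-transpose on the matrix side, and invoking \eqref{cyc} at exactly the step where commutators must be shown to lie in the kernel of the linear functional $p\mapsto p(y)$.
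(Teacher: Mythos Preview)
Your proof is correct and follows the same approach as the paper, which simply notes that one may pass to the finite truncation $M_k(y)$ for $k=\max\{\deg p,\deg q\}$ and then says ``both statements now follow by direct calculation.'' You have written out that direct calculation explicitly; nothing more is needed.
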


\begin{proof}
Let $p,q\in \R \ax$. For $k:=\max \{\deg p,\deg q\}$, we have 
\begin{equation}
{\vv{p}}^*M(y)\vv{q}={\vv{p}}^*M_k(y)\vv{q}.
\end{equation}
Both statements now follow by direct calculation. 
\end{proof}

We can identify the kernel of a tracial moment matrix $M$ with the subset of $\R \ax$
given by 
\begin{equation}\label{eq:momKer}
	I:=\{p\in \R \ax\mid M\vv p=0\}.
\end{equation}

\begin{prop}\label{lem:kerideal} Let $M\succeq0$ be a tracial moment matrix. Then 
	\begin{equation}\label{kerideal}
	I=\{p\in \R \ax\mid \langle M\vv{p},\vv{p}\rangle=0\}.
	\end{equation} 
	Further, $I$ 
	is a two-sided ideal of $\R \ax$ invariant under the involution. 
\end{prop}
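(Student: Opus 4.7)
My plan is to first establish the equivalent characterization \eqref{kerideal} of $I$, and then use the two identities in Lemma \ref{lem:mk} together with the cycstar property of $y$ to verify the ideal and $*$-invariance properties.

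For \eqref{kerideal}, one direction is trivial. The converse is standard from $M\succeq 0$: the form $(\vv{p},\vv{q})\mapsto \langle M\vv{p},\vv{q}\rangle$ is a positive semidefinite (sesquilinear) form and therefore satisfies the Cauchy--Schwarz inequality $|\langle M\vv{p},\vv{q}\rangle|^2\leq \langle M\vv{p},\vv{p}\rangle\,\langle M\vv{q},\vv{q}\rangle$. Thus $\langle M\vv{p},\vv{p}\rangle=0$ forces every inner product with $M\vv{p}$ to vanish, so $M\vv{p}=0$.

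For the left ideal property I take $p\in I$, $q\in\R\ax$ and show $\langle M\vv{qp},\vv{r}\rangle=0$ for every $r$. Lemma \ref{lem:mk}(2) gives $\vv{r}^{*}M\vv{qp}=\vv{1}^{*}M\vv{r^{*}qp}$; rewriting $r^{*}qp=(q^{*}r)^{*}p$ and reapplying Lemma \ref{lem:mk}(2) yields $\vv{(q^{*}r)}^{*}M\vv{p}=0$. Since $r$ was arbitrary, $M\vv{qp}=0$ and $qp\in I$. The right-multiplication case is the same after one extra step: before applying Lemma \ref{lem:mk}(2), I use cyclic invariance (Lemma \ref{lem:mk}(1)) to exchange $r^{*}pq\csim qr^{*}p$ and thereby move $p$ to the rightmost slot, so that $\vv{1}^{*}M\vv{qr^{*}p}=\vv{(rq^{*})}^{*}M\vv{p}=0$. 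For $*$-invariance I reduce $\langle M\vv{p^{*}},\vv{r}\rangle=\vv{1}^{*}M\vv{r^{*}p^{*}}=\vv{1}^{*}M\vv{(pr)^{*}}$ to $M\vv{p}$ via the defining property \eqref{cycstar}: a coefficient-by-coefficient check gives $\vv{1}^{*}M\vv{(pr)^{*}}=\vv{1}^{*}M\vv{pr}$, since the coefficient of $w$ in $(pr)^{*}$ is the coefficient of $w^{*}$ in $pr$ and $y_{w}=y_{w^{*}}$. One more application of Lemma \ref{lem:mk}(1) and (2) then gives $\vv{1}^{*}M\vv{pr}=\vv{1}^{*}M\vv{rp}=\vv{r^{*}}^{*}M\vv{p}=0$.

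The only real difficulty is notational rather than conceptual: the symbol $*$ is used both for the conjugate transpose of a coefficient vector (as in $\vv{p}^{*}$) and for the involution on $\R\ax$ reversing each word (as in $p^{*}$), and the relationship between them is nontrivial (the latter is not the adjoint of the former). Once one keeps these two meanings apart, every step is a mechanical application of Lemma \ref{lem:mk} combined with the defining identities \eqref{cyc} and \eqref{cycstar} of a tracial sequence.
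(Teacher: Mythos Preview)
Your proof is correct, and for the ideal and $*$-invariance parts it follows a genuinely different route from the paper.

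For \eqref{kerideal} your Cauchy--Schwarz argument and the paper's square-root argument are interchangeable standard tricks. The real difference is in the second half. The paper exploits \eqref{kerideal} throughout: to show $I$ is a right ideal it computes $\langle pq,pq\rangle_M = \langle pqq^*,p\rangle_M$ (via the cyclic identity) and then bounds this by Cauchy--Schwarz against $\langle p,p\rangle_M=0$; for $*$-invariance it simply observes $\langle p^*,p^*\rangle_M=(pp^*)(y)=(p^*p)(y)=\langle p,p\rangle_M$. You instead bypass \eqref{kerideal} entirely for these steps and test $M\vv{qp}$, $M\vv{pq}$, $M\vv{p^*}$ against an arbitrary $\vv{r}$, reducing each pairing via Lemma~\ref{lem:mk} and the identities \eqref{cyc}, \eqref{cycstar} to something of the form $\vv{s}^{*}M\vv{p}=0$.

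What your approach buys: the ideal and $*$-closure of $\ker M$ in your argument nowhere uses $M\succeq 0$; it holds for an arbitrary tracial moment matrix. The paper's approach, by contrast, routes through \eqref{kerideal} and Cauchy--Schwarz and so relies on positive semidefiniteness even for the ideal property. What the paper's approach buys: once \eqref{kerideal} is in hand, the $*$-invariance is a one-line cyclic observation ($p^*p\csim pp^*$), whereas your verification of $\vv{1}^*M\vv{(pr)^*}=\vv{1}^*M\vv{pr}$ is a small coefficient computation invoking \eqref{cycstar} explicitly.
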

\begin{proof}
	Let $J:=\{p\in \R \ax\mid \langle M\vv{p},\vv{p}\rangle=0\}$. The implication
$I\subseteq J$ is obvious. Let $p\in J$ be given and $k=\deg p$.
Since $M$ and thus $M_k$ for each $k\in \N$ is positive semidefinite, the square root 
$\sqrt{M_k}$ of $M_k$ exists. Then
$0=\langle M_k\vv{p},\vv p\rangle=\langle\sqrt{M_k}\vv{p}, \sqrt{M_k}\vv{p}\rangle$ implies
$\sqrt{M_k}\vv{p}=0$. This leads to $M_k\vv{p}=M\vv p=0$, thus $p\in I$.

To prove that $I$ is a two-sided ideal, it suffices to show that $I$ is a right-ideal 
which is closed under *. To do this, consider the bilinear map 
$$ \langle p,q\rangle_M:= \langle M\vv{p},\vv{q}\rangle$$ on $\R \ax$, which is a semi-scalar 
product. By Lemma \ref{lem:mk}, we get that
$$\langle pq,pq\rangle_M=((pq)^*pq)(y)=(qq^*p^*p)(y)= \langle pqq^*,p\rangle_M.$$
Then by the Cauchy-Schwarz inequality it follows that for $p\in I$, we have
$$0\leq \langle pq,pq\rangle_M^2=\langle pqq^*,p\rangle_M^2\leq 
\langle pqq^*,pqq^*\rangle_M\langle p,p\rangle_M=0.$$
Hence $pq\in I$, i.e., $I$ is a right-ideal.

Since $p^*p\csim pp^*$, we obtain from Lemma \ref{lem:mk} that
$$\langle M\vv{p},\vv{p} \rangle=\langle p,p \rangle_M=(p^*p)(y)=(pp^*)(y)=\langle p^*,p^* 
\rangle_M=
\langle M{\vv p}^*,{\vv p}^* \rangle.$$ Thus if $p\in I$ then also $p^*\in I$. 
\end{proof}

In the \emph{commutative} context, the kernel of $M$ is a real radical ideal if $M$ is positive
semidefinite as observed by Scheiderer (cf.~\cite[p.~2974]{laurent2}).
The next proposition gives a description of 
the kernel of $M$ in the non-commutative setting, and could be helpful in 
defining a non-commutative real radical ideal.

\begin{prop}\label{prop:radical}
For the ideal $I$ in \eqref{eq:momKer} we have
$$I=\{f\in \R \ax\mid (f^*f)^k\in I \;\text{for some}\;k\in \N\}.$$ 
Further, 
$$I=\{f\in \R \ax\mid (f^*f)^{2k}+\sum g_i^*g_i\in I \;\text{for some}\;k\in \N, g_i\in \R \ax\}.
$$
\end{prop}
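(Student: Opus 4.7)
The plan is to verify both equalities by showing each containment in turn.

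For the first identity, the inclusion $(\subseteq)$ is immediate from Proposition~\ref{lem:kerideal}: since $I$ is a two-sided ideal, $f\in I$ gives $(f^*f)^k\in I$ for every $k\geq 1$. For the converse, I would reduce to a scalar Hamburger-type argument. Write $a := f^*f\in\sym\R\ax$ and put $s_m := (a^m)(y)$ for $m\geq 0$. By Lemma~\ref{lem:mk}(2), $s_{m+n} = (a^{m+n})(y) = \vv{a^m}^{*}M\vv{a^n} = \langle a^m,a^n\rangle_M$, so the infinite Hankel matrix $H := (s_{m+n})_{m,n\geq 0}$ is the Gram matrix of $\{a^m\}_{m\geq 0}$ with respect to the semi-scalar product induced by $M\succeq 0$; in particular every finite principal submatrix of $H$ is positive semidefinite. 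Moreover $a^k\in I$ together with Lemma~\ref{lem:mk} yields $s_{m+k} = \vv{a^m}^{*}M\vv{a^k} = 0$ for every $m\geq 0$, so $s_m = 0$ whenever $m\geq k$.

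Now apply the $2\times 2$ principal minor of $H$ on rows and columns $\{0,j\}$: it gives $s_0 s_{2j}\geq s_j^2$, and since $s_0 = y_\emptyset = 1$, the condition $2j\geq k$ forces $s_j = 0$. This drops the cutoff from $k$ to $\lceil k/2\rceil$; iterating the recursion $k_{n+1} := \lceil k_n/2\rceil$ reaches $1$ in finitely many steps, so $s_m = 0$ for every $m\geq 1$. In particular $s_1 = (f^*f)(y) = \langle M\vv f,\vv f\rangle = 0$, and Proposition~\ref{lem:kerideal} delivers $f\in I$. The only mild technical point in this part is the halving bookkeeping; the main conceptual move is reducing to a one-variable moment sequence and exploiting $M\succeq 0$ via the $2\times 2$ Cauchy--Schwarz-type minors.

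The second identity follows easily from the first. The inclusion $(\subseteq)$ is trivial (take $k=1$ and all $g_i = 0$). Conversely, suppose $h := (f^*f)^{2k} + \sum_i g_i^{*}g_i$ lies in $I$. Then by Lemma~\ref{lem:mk}(1),
\[
0 \;=\; h(y) \;=\; ((f^*f)^{2k})(y) + \sum_i (g_i^{*}g_i)(y) \;=\; \langle (f^*f)^k,(f^*f)^k\rangle_M + \sum_i \langle g_i,g_i\rangle_M,
\]
a sum of nonnegative terms since $M\succeq 0$. Hence each summand vanishes; in particular $\langle (f^*f)^k,(f^*f)^k\rangle_M = 0$ forces $(f^*f)^k\in I$ by Proposition~\ref{lem:kerideal}, and the first identity just proved then yields $f\in I$.
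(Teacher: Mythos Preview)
Your proof is correct and follows essentially the same halving strategy as the paper. The only cosmetic difference is packaging: where you form the Hankel matrix $H=(s_{m+n})$ and invoke its $2\times 2$ principal minors to push the vanishing cutoff from $k$ down to $\lceil k/2\rceil$, the paper observes more directly that $s_k=\langle (f^*f)^{k/2},(f^*f)^{k/2}\rangle_M$, so $(f^*f)^k\in I$ immediately forces $(f^*f)^{k/2}\in I$ via Proposition~\ref{lem:kerideal} (after passing to even $k$). Your second identity argument coincides with the paper's verbatim.
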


\begin{proof}
If $f\in I$ then also $f^*f\in I$ since $I$ is an ideal. If $f^*f\in I$ we have
$M\vv{f^*f}=0$ which implies by Lemma \ref{lem:mk} that
$$0={\vv 1}^*M\vv{f^*f}={\vv f}^*M\vv{f}=\langle Mf,f\rangle.$$ 
Thus $f\in I$. 
If $(f^*f)^k\in I$ then also $(f^*f)^{k+1}\in I$. So without loss of generality let $k$ be even. 
From $(f^*f)^k\in I$ we obtain 
$$0={\vv 1}^*M\vv{(f^*f)^k}={\vv{(f^*f)^{k/2}}}^*M\vv{(f^*f)^{k/2}},$$ implying 
$(f^*f)^{k/2}\in I$. This leads to $f\in I$ by induction.

To show the second statement let $(f^*f)^{2k}+\sum g_i^*g_i\in I$. This leads to
$${\vv{(f^*f)^k}}^*M\vv{(f^*f)^k}+\sum_i {\vv{g_i}}^*M\vv{g_i}=0.$$ Since 
$M(y)\succeq0$ we have ${\vv{(f^*f)^k}}^*M\vv{(f^*f)^k}\geq 0$ and 
${\vv{g_i}}^*M\vv{g_i}\geq 0.$ Thus ${\vv{(f^*f)^k}}^*M\vv{(f^*f)^k}=0$ 
(and ${\vv{g_i}}^*M\vv{g_i}= 0$) which implies $f\in I$ as above. 
\end{proof}

In the commutative setting one uses  the Riesz representation theorem for 
some set of continuous functions (vanishing at infinity or with compact support) 
to show the existence of a representing measure. We will use the Riesz 
representation theorem for positive linear functionals on a 
finite-dimensional Hilbert space. 

\begin{dfn}
Let $\mathcal A$ be an $\R$-algebra with involution. We call a linear map 
$L:\mathcal A\to \R$ a \emph{state} if 
$L(1)=1$, $L(a^*a)\geq0$ and $L(a^*)=L(a)$ for all $a\in\mathcal A$. 
If all the commutators have value $0$, i.e., if $L(ab)=L(ba)$ for all 
$a,b\in \mathcal A$, then $L$ is called a \emph{tracial state}.
\end{dfn}

With the aid of the Artin-Wedderburn theorem we shall
characterize tracial states on matrix $*$-algebras in Proposition
\ref{prop:convtrace}.
This will enable us to prove the existence of a tracial moment representation for
tracial sequences with a finite rank tracial moment matrix; see Theorem
\ref{thm:finiterank}.

\begin{remark}\label{rem:aw}
The only central simple algebras over $\R$ are full matrix
algebras over $\R$, $\C$ or $\HH$ (combine the Frobenius theorem 
\cite[(13.12)]{Lam} with the Artin-Wedderburn theorem \cite[(3.5)]{Lam}).
In order to understand ($\R$-linear) tracial states on these, we recall
some basic Galois theory.

Let 
$$\Trd_{\C/\R}:\C\to\R, \quad z\mapsto\frac 12(z+\bar z) $$ 
denote the \emph{field trace} and 
$$\Trd_{\HH/\R}:\HH\to\R,\quad z\mapsto\frac12(z+\bar z)$$
the \emph{reduced trace} \cite[p.~5]{boi}.
Here the Hamilton quaternions $\HH$ are endowed with the \emph{standard
involution}
$$
z=a+\ii b+\jj c+\kk d \mapsto a-\ii b-\jj k-\kk d = \bar z
$$
for $a,b,c,d\in\R$.
We extend the canonical involution on $\C$ and $\HH$ to the conjugate
transpose involution $*$ on matrices
over $\C$ and $\HH$, respectively.

Composing the field trace and reduced trace, respectively, with the normalized
trace, yields an $\R$-linear map from $\C^{t\times t}$ and
$\HH^{t\times t}$, respectively, to $\R$. We will denote it simply
by $\Tr$. A word of \emph{caution}: 
$\Tr(A)$ does not denote the (normalized) matricial trace 
over $\KK$
if $A\in \KK^{t\times t}$ and $\KK\in\{\C,\HH\}$.
\end{remark}

An alternative description of $\Tr$ is given by the following lemma:

\begin{lemma}\label{lem:convtrace}
Let $\KK\in\{\R,\C,\HH\}$. Then
the only $(\R$-linear$)$ tracial state on $\KK^{t\times t}$ is $\Tr$.
\end{lemma}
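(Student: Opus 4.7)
The plan is to exploit the matrix unit structure of $\KK^{t\times t}$ to reduce $L$ to an $\R$-linear functional on $\KK$, and then pin that functional down using the involution and normalization axioms. Let $E_{ij}$ denote the standard matrix units, and consider the rank-one matrices $\alpha E_{ij}$ for $\alpha\in\KK$; these span $\KK^{t\times t}$ as an $\R$-vector space.

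The first step is to compute $L(\alpha E_{ij})$. Off the diagonal, comparing the products $E_{ij}(\alpha E_{jj}) = \alpha E_{ij}$ and $(\alpha E_{jj})E_{ij} = 0$ (where $i \neq j$) and applying the tracial property forces $L(\alpha E_{ij}) = 0$. On the diagonal, the analogous computation $E_{ij}(\alpha E_{ji}) = \alpha E_{ii}$ versus $(\alpha E_{ji})E_{ij} = \alpha E_{jj}$ gives $L(\alpha E_{ii}) = L(\alpha E_{jj})$ for all $i,j$. Consequently $L$ is governed by a single $\R$-linear map $\psi: \KK \to \R$, namely $\psi(\alpha) := L(\alpha E_{11})$, via the formula $L(A) = \psi(\tr_\KK(A))$, where $\tr_\KK(A) = \sum_i A_{ii}$ is the $\KK$-valued diagonal sum.

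Next I would translate the remaining axioms of a tracial state into constraints on $\psi$. The tracial condition, applied to general $A$ and $B$, unwinds to $\psi\bigl(\sum_{i,j}[A_{ij},B_{ji}]\bigr) = 0$, so $\psi$ must annihilate the $\R$-span of $[\KK,\KK]$; this is vacuous for $\KK\in\{\R,\C\}$ and forces $\psi|_{\HH_0} = 0$ when $\KK=\HH$. The hermiticity axiom $L(A^*)=L(A)$, combined with $\tr_\KK(A^*) = \overline{\tr_\KK(A)}$, yields $\psi(\bar z) = \psi(z)$, so $\psi$ also vanishes on $\{z-\bar z : z\in\KK\}$. In all three cases these constraints pin $\psi$ down as a real scalar multiple of $\Trd_{\KK/\R}$, say $\psi = c\,\Trd_{\KK/\R}$.

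Finally, evaluating $L(1)=1$ at the identity matrix gives $\psi(t) = ct = 1$, so $c = 1/t$ and $L(A) = \tfrac{1}{t}\Trd_{\KK/\R}(\tr_\KK(A)) = \Tr(A)$ in the sense of Remark \ref{rem:aw}. The positivity axiom plays no role in this uniqueness argument. The main technical subtlety is the quaternion case, where one must take care not to conflate the diagonal $\HH$-valued trace with the real-valued $\Tr$, and to recognize that although $[\HH,\HH]$ is nonzero, it sits precisely inside $\ker\Trd_{\HH/\R} = \HH_0$, which is exactly what allows the whole reduction to go through uniformly in $\KK$.
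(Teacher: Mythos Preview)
Your proof is correct and takes a different route from the paper's. The paper first handles $\R^{t\times t}$ via the Riesz representation theorem---writing $L(A)=\Tr(BA)$ for some matrix $B$ and then using the tracial property on the matrix units $E_{ij}$ and on $E_{ii}-E_{jj}$ to force $B$ to be scalar---and afterwards treats $\C^{t\times t}$ and $\HH^{t\times t}$ by restricting $L$ to the real subalgebra and checking agreement on the remaining directions $\ii$ (resp.\ $\ii,\jj,\kk$). You instead work uniformly in $\KK$: the same matrix-unit commutator identities collapse $L$ to a single $\R$-linear functional $\psi$ on $\KK$, and the hermiticity axiom kills the imaginary part of $\KK$, pinning $\psi$ down as $\tfrac1t\Trd_{\KK/\R}$. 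Your argument is more elementary (no Riesz representation needed) and handles all three division rings in one stroke; it also makes explicit that positivity is irrelevant for uniqueness---the paper records $B\succeq0$ as a consequence of positivity but never actually uses this fact. Note, incidentally, that your step invoking $\psi|_{[\KK,\KK]}=0$ is redundant even for $\KK=\HH$, since the hermiticity constraint $\psi(\bar z)=\psi(z)$ already forces $\psi$ to vanish on all of $\HH_0=\{z-\bar z:z\in\HH\}$.
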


\begin{proof}
An easy calculation shows that $\Tr$ is indeed a tracial state.

Let $L$ be a tracial state on $\R^{t\times t}$.
By the Riesz representation theorem there exists a positive 
semidefinite matrix $B$ with $\Tr(B)=1$ such that $$L(A)=\Tr(BA)$$ for all 
$A\in\R^{t\times t}$.

Write $B=\begin{pmatrix}b_{ij}\end{pmatrix}_{i,j=1}^{t}$.
Let 
$i\neq j$.
Then $A=\lambda E_{ij}$ has zero trace for every 
$\lambda\in \R$ and is thus a sum of commutators. 
(Here $E_{ij}$ denotes the $t\times t$ \emph{matrix unit} with a one
in the $(i,j)$-position and zeros elsewhere.)
Hence 
$$\lambda b_{ij} = L(A)  = 0.$$
Since $\lambda\in\R$ was arbitrary, $b_{ij}=0$.

Now let $A=\lambda (E_{ii}-E_{jj})$. Clearly, 
$\Tr(A)=0$ and hence $$\lambda(b_{ii}-b_{jj})= L(A)= 0.$$
As before, this gives $b_{ii}=b_{jj}$. So $B$ is scalar,
and $\Tr(B)=1$. Hence it is the
identity matrix. In particular, $L=\Tr$.

If $L$ is a tracial state on $\C^{t\times t}$, 
then $L$ induces a tracial state on $\R^{t\times t}$,
so $L_0:=L|_{\R^{t\times t}}=\Tr$ by the above.
Extend $L_0$ to 
$$L_1:\C^{t\times t} \to \R,
\quad A+\ii B\mapsto L_0(A)=\Tr(A)  \quad\text{for } A,B\in\R^{t\times t}.
$$
$L_1$ is a tracial state on $\C^{t\times t}$ as a 
straightforward computation
shows. As $\Tr(A)=\Tr(A+\ii B)$, all we need to show is that $L_1=L$.

Clearly, $L_1$ and $L$ agree on the vector space spanned
by all commutators in $\C^{t\times t}$. This space is (over $\R$)
of codimension $2$. By construction, $L_1(1)=L(1)=1$ and
$L_1(\ii)=0$. On the other hand,
$$L(\ii)=L(\ii^*)=-L(\ii)$$ implying $L(\ii)=0$.
This shows $L=L_1=\Tr$.

The remaining case of tracial states over $\HH$ is dealt 
with
similarly and is left as an exercise for the reader.
\end{proof}

\begin{remark}\label{rem:real}
Every complex number $z=a+\ii b$ can be represented
as a $2\times 2$ real matrix 
$z'=\left(\begin{smallmatrix} a & b \\ -b & a\end{smallmatrix}\right)$.
This gives rise to 
an $\R$-linear $*$-map
$\C^{t\times t}\to \R^{(2t)\times(2t)}$ that commutes with $\Tr$. 
A similar property holds if quaternions
$a+\ii b+\jj c+\kk d$ 
are represented by the $4\times 4$ real matrix
$$\left(\begin{smallmatrix}
 a & b & c & d \\ 
 -b & a & -d & c \\
 -c & d & a & -b \\
 -d & -c & b & a 
\end{smallmatrix}\right).$$
\end{remark}

\begin{prop}\label{prop:convtrace}
	Let $\mathcal A$ be a $*$-subalgebra of $ \R^{t\times t}$ for some $t\in \N$ and
	$L:\mathcal A\to \R$ a tracial state.
	Then there exist 
full matrix algebras $\mathcal A^{(i)}$ over $\R$, $\C$ or $\HH$, 
a $*$-isomorphism 
\begin{equation}\label{eq:iso}
\mathcal A\to\bigoplus_{i=1}^N \mathcal A^{(i)},
\end{equation}
and $\lambda_1,\dots, \lambda_N\in \R_{\geq0}$ with $\sum_i \lambda_i=1$, such that for all 
$A\in \mathcal A$,
	 $$L(A)=\sum_i^N \lambda_i\Tr(A^{(i)}).$$
Here, $\bigoplus_i A^{(i)} =\left(\begin{smallmatrix} A^{(1)} \\ & \ddots \\ & & A^{(N)}
\end{smallmatrix}\right)$ denotes the image of $A$ under the isomorphism
\eqref{eq:iso}. The size of $($the real representation of$)$ $\bigoplus_i A^{(i)}$ is 
at most $t$.
\end{prop}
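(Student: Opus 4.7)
The plan is to decompose $\mathcal A$ as a direct sum of simple $*$-algebras via Artin-Wedderburn and apply Lemma \ref{lem:convtrace} on each summand. The transpose involution on $\R^{t\times t}$ restricts to a \emph{positive} involution on $\mathcal A$ (i.e., $a^*a=0$ forces $a=0$, since $\tr(a^*a)=\sum a_{ij}^2$), from which one deduces that $\mathcal A$ is semisimple: its Jacobson radical is $*$-invariant, so any nonzero element $a$ in it yields $a^*a$ which is nilpotent as well as selfadjoint in $\R^{t\times t}$, hence diagonalizable with zero eigenvalues, forcing $a=0$. Combining Artin-Wedderburn with Frobenius (Remark \ref{rem:aw}), together with the classification of positive involutions on simple real algebras, then produces a $*$-isomorphism
$$\mathcal A\;\cong\;\bigoplus_{i=1}^N\mathcal A^{(i)},$$
in which each $\mathcal A^{(i)}$ is a full matrix algebra over some $\KK_i\in\{\R,\C,\HH\}$ carrying its conjugate transpose involution.

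Next, let $e_i\in\mathcal A$ denote the central selfadjoint idempotent corresponding to the $i$th block, so $1=\sum_i e_i$ and $\mathcal A^{(i)}=e_i\mathcal A$. Setting $\lambda_i:=L(e_i)=L(e_i^*e_i)\geq 0$, linearity gives $\sum_i\lambda_i=L(1)=1$. For each $i$ with $\lambda_i>0$, the rescaled restriction $\lambda_i^{-1}L|_{\mathcal A^{(i)}}$ is a tracial state on $\mathcal A^{(i)}$ and therefore equals $\Tr$ by Lemma \ref{lem:convtrace}. Writing $A^{(i)}:=e_iA$, this yields $L(A)=\sum_i\lambda_i\Tr(A^{(i)})$, with blocks for which $\lambda_i=0$ contributing nothing.

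For the size bound, view $\R^t$ as a faithful left $\mathcal A$-module via the inclusion, hence (through the $*$-isomorphism) as a faithful $\bigoplus_i\mathcal A^{(i)}$-module. Decomposing isotypically, $\R^t\cong\bigoplus_iV_i^{\oplus m_i}$, where $V_i$ is the essentially unique nontrivial irreducible real $\mathcal A^{(i)}$-module, with $\dim_\R V_i=n_i\dim_\R\KK_i$ when $\mathcal A^{(i)}\cong\KK_i^{n_i\times n_i}$. Faithfulness forces each $m_i\geq 1$, so
$$t\;=\;\sum_im_i\dim_\R V_i\;\geq\;\sum_in_i\dim_\R\KK_i,$$
and the right-hand side is precisely the size of the real representation of $\bigoplus_iA^{(i)}$ prescribed by Remark \ref{rem:real}. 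The main obstacle is the first paragraph: one needs the classification of positive involutions on simple finite-dimensional real algebras (namely, that they coincide up to $*$-isomorphism with the conjugate transpose on $\KK^{n\times n}$), so that Artin-Wedderburn can be upgraded to a $*$-decomposition. Once this is in hand, everything that follows is essentially bookkeeping.
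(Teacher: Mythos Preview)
Your proof is correct and follows the same outline as the paper: decompose $\mathcal A$ into simple $*$-summands and apply Lemma~\ref{lem:convtrace} on each. The execution differs in one point worth noting. You obtain the $*$-decomposition abstractly---Artin--Wedderburn for the algebra structure, then the classification of positive involutions on real simple algebras to pin down the involution on each factor---and you flag this classification as the main obstacle. The paper instead stays inside $\R^{t\times t}$ throughout: because $\mathcal A$ is closed under transpose, orthogonal complements of $\mathcal A$-invariant subspaces of $\R^t$ are again invariant, so an orthogonal change of basis puts $\mathcal A$ into block-diagonal form with each block acting irreducibly; the involution on each block is then automatically the restriction of transpose, and the size bound comes for free. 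This concrete route sidesteps the classification of positive involutions entirely, at the cost of being somewhat informal about repeated irreducibles---a point your isotypic-decomposition argument for the size bound handles more carefully.
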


\begin{proof}
Since $L$ is tracial, 
$L(U^*AU)=L(A)$ for all orthogonal $U\in\R^{t\times t}$.
Hence we can apply orthogonal transformations to $\mathcal A$ 
without changing the values of $L$. 
So $\mathcal A$ can be transformed into block diagonal form
as in \eqref{eq:iso}
according to its invariant subspaces.
That is, each of the blocks $\mathcal A^{(i)}$ 
acts irreducibly on a subspace of $\R^t$ and is thus 
a central 
simple algebra (with involution) over $\R$.
The involution on $\mathcal A^{(i)}$ is induced by the
conjugate transpose involution. (Equivalently, by the
transpose on the real matrix representation in the complex
of quaternion case.)

Now $L$ induces (after a possible normalization) a tracial state on the block
$\mathcal A^{(i)}$ and hence by Lemma \ref{lem:convtrace}, we have
$L_i:=L|_{\mathcal A^{(i)}}=\lambda_i \Tr$ for some $\lambda_i\in\R_{\geq0}$.
Then
\[
L(A)=L\big(\bigoplus_i A^{(i)}\big)=\sum_i L_i\big(A^{(i)}\big)
= \sum_i \lambda_i \Tr\big(A^{(i)}\big)
\]
and
$1=L(1)=\sum_i \lambda_i$.
\end{proof}

The following theorem is the tracial version of the representation theorem 
of Curto and Fialkow for moment matrices with finite rank \cite{cffinite}.

\begin{thm}\label{thm:finiterank}
Let $y=(y_w)$ be a tracial sequence with positive semidefinite 
moment matrix $M(y)$ of finite rank $t$. Then $y$ is a tracial moment
sequence, i.e., there exist vectors 
$\ushort A^{(i)}=(A_1^{(i)},\dots,A_n^{(i)})$ of symmetric matrices $A_j^{(i)}$ 
of size at most $t$ and $\lambda_i\in \R_{\geq0}$ with $\sum \lambda_i=1$ 
such that $$y_w=\sum \lambda_i \Tr(w(\ushort A^{(i)})).$$ 
\end{thm}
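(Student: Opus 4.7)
The plan is to build from $y$ a finite-dimensional real $*$-algebra carrying a natural tracial state, realize it as a $*$-subalgebra of $\R^{t\times t}$ via the left regular representation, and then invoke Proposition \ref{prop:convtrace} to decompose that state as a convex combination of normalized traces. First, let $I$ denote the kernel \eqref{eq:momKer} of $M$. By Proposition \ref{lem:kerideal}, $I$ is a two-sided $*$-ideal of $\R\ax$, so $\mathcal{A}:=\R\ax/I$ inherits an involution and becomes a real $*$-algebra. The description \eqref{kerideal} identifies $I$ with the radical of the semi-scalar product $\langle p,q\rangle_M := \vv{p}^*M\vv{q}$, so this form descends to a positive definite inner product on $\mathcal{A}$, and $\dim_\R \mathcal{A} = \rank M = t$.

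Next, define $L:\mathcal{A}\to\R$ by $L([p]):=p(y)$. Lemma \ref{lem:mk}(1) ensures this is well defined on the quotient, while the defining conditions \eqref{cyc}, \eqref{cycstar} together with $M\succeq 0$ translate into $L(1)=1$, $L([p^*p])=\langle M\vv{p},\vv{p}\rangle\geq 0$, $L([p^*])=L([p])$ and $L([pq])=L([qp])$; hence $L$ is a tracial state on $\mathcal{A}$. Fix an orthonormal basis of $(\mathcal{A},\langle\cdot,\cdot\rangle_M)$ and, for $a\in\mathcal{A}$, let $\pi(a)\in\R^{t\times t}$ be the matrix of left multiplication by $a$ in this basis. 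The map $\pi$ is an injective $\R$-algebra homomorphism (since $\pi(a)$ sends the class of $1$ to $a$), and Lemma \ref{lem:mk}(2) yields
$$\langle ab,c\rangle_M = (b^*a^*c)(y) = \langle b,a^*c\rangle_M,$$
so $\pi(a^*)=\pi(a)^T$. Thus $\pi$ is a $*$-embedding of $\mathcal{A}$ into $\R^{t\times t}$, and in particular each $B_j:=\pi([X_j])$ is symmetric since $X_j^*=X_j$.

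Finally, apply Proposition \ref{prop:convtrace} to the tracial state $L\circ\pi^{-1}$ on $\pi(\mathcal{A})$: this yields a $*$-isomorphism $\pi(\mathcal{A})\cong\bigoplus_{i=1}^{N}\mathcal{A}^{(i)}$ onto a direct sum of full matrix algebras over $\R$, $\C$, or $\HH$, along with weights $\lambda_i\geq 0$ with $\sum_i\lambda_i=1$ such that $L(A)=\sum_i\lambda_i\Tr(A^{(i)})$. Letting $A_j^{(i)}$ be the image of $B_j$ in the $i$-th block and passing to the real matrix representation of any $\C$- or $\HH$-block via Remark \ref{rem:real} (which preserves the involution and commutes with $\Tr$), each $A_j^{(i)}$ becomes a real symmetric matrix of size at most $t$. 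Then for every word $w$,
$$y_w = L([w]) = \sum_{i=1}^{N}\lambda_i\Tr\bigl(w(\ushort A^{(i)})\bigr),$$
which is the required tracial moment representation. The delicate point is verifying that the left regular representation of $\mathcal{A}$ is genuinely $*$-preserving with respect to the transpose involution on $\R^{t\times t}$; once that is secured, Proposition \ref{prop:convtrace} and Remark \ref{rem:real} handle the decomposition and the reality/symmetry of the matrices $A_j^{(i)}$ automatically.
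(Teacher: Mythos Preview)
Your proof is correct and follows essentially the same GNS-type strategy as the paper: quotient $\R\ax$ by the kernel of $M$, realize the resulting $t$-dimensional $*$-algebra as operators on itself, and apply Proposition \ref{prop:convtrace} to the induced tracial state. The only cosmetic difference is that the paper uses \emph{right} multiplication operators $\hat X_i\colon \bar p\mapsto \overline{pX_i}$ (whose symmetry is verified via the tracial identity $(X_ip^*q)(y)=(p^*qX_i)(y)$), whereas you use the left regular representation, for which $*$-preservation follows directly from Lemma \ref{lem:mk}(2) without invoking traciality; both routes land in the same place.
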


\begin{proof}
Let $M:=M(y)$. We equip $\R\ax$ with the bilinear form given by
$$\langle p,q\rangle_M:=\langle M\vv{p},\vv{q} \rangle={\vv{q}}^*M\vv p.$$ Let 
$I=\{p\in \R\ax\mid \langle p,p\rangle_M=0\}.$ Then by Proposition \ref{lem:kerideal}, 
$I$ is an ideal of $\R \ax$. In particular, $I=\ker \varphi_M$ for 
$$\varphi_M:\R \ax\to \ran M,\quad p\mapsto M\vv{p}.$$ Thus if we define
$E:=\R \ax/I$, the induced linear map 
$$\overline\varphi_M:E\to \ran M,\quad \overline p\mapsto M\vv{p}$$
is an isomorphism and $$\dim E=\dim(\ran M)=\rank M=t<\infty.$$ Hence 
$(E,\langle$\textvisiblespace ,\textvisiblespace $\rangle_E)$ is a finite-dimensional 
Hilbert space for
$\langle \bar p,\bar q\rangle_E={\vv{q}}^*M\vv{p}$. 

Let $\hat X_i$ be the right multiplication with $X_i$ on $E$, i.e., 
$\hat X_i \overline p:=\overline{pX_i}$. Since 
$I$ is a right ideal of $\R \ax$, the operator $\hat X_i$ is well defined.
Further, $\hat X_i$ is symmetric since
\begin{align*}
\langle \hat X_i \overline p,\overline q \rangle_E&=\langle M \vv{pX_i},\vv{q} \rangle
= (X_ip^*q)(y)\\
&=(p^*qX_i)(y)=\langle M \vv{p},\vv{qX_i} \rangle=\langle\overline p,\hat X_i\overline q \rangle_E.
\end{align*}
Thus each $\hat X_i$, acting on a $t$-dimensional vector space, has a representation matrix 
$A_i\in \sym \R^{t\times t}$.
 
Let $\mathcal B=B(\hat X_1,\dots,\hat X_n)=B(A_1,\dots,A_n)$ be the algebra of 
operators generated by $\hat X_1,\dots,\hat X_n$. These operators can be written
as $$\hat p=\sum_{w\in\ax} p_w \hat{w}$$ for some $p_w\in \R$, 
where $\hat w=\hat X_{w_1}\cdots \hat X_{w_s}$ for $w=X_{w_1}\cdots X_{w_s}$.
Observe that $\hat{w}=w(A_1,\dots,A_n)$.
We define the linear functional $$L:\mathcal B\to\R,\quad
\hat p\mapsto {\vv{1}}^*M\vv p=p(y),$$
which is a state on $\mathcal B$.
Since $y_w=y_u$ for $w\csim u$, it follows that $L$ is tracial. Thus by Proposition 
\ref{prop:convtrace} (and Remark \ref{rem:real}), there exist
$\lambda_1,\dots \lambda_N\in \R_{\geq0}$ with $\sum_i\lambda_i=1$ and real symmetric matrices $A_j^{(i)}$
$(i=1,\ldots,N$) 
for each $A_j\in \sym \R^{t\times t}$, such that for all $w\in \ax$, 
$$y_w=w(y)=L(\hat w)=\sum_i \lambda_i \Tr(w(\ushort A^{(i)})),$$
as desired. 
\end{proof}

The sufficient conditions on $M(y)$ in Theorem \ref{thm:finiterank} are also 
necessary for $y$ to be a tracial moment sequence. Thus we get our first 
characterization of tracial moment sequences:

\begin{cor}\label{cor:finite}
Let $y=(y_ w)$ be a tracial sequence. Then $y$ is a tracial moment sequence
if and only if $M(y)$ is positive semidefinite and of finite rank.
\end{cor}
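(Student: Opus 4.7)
The plan is to note that the forward implication ``positive semidefinite and finite rank imply tracial moment sequence'' is precisely Theorem \ref{thm:finiterank}. Hence I only need to establish the converse: if $y$ admits a tracial representation
\[
y_w = \sum_{i=1}^N \lambda_i \Tr\bigl(w(\ushort A^{(i)})\bigr), \qquad \ushort A^{(i)} \in (\sym\R^{t\times t})^n, \ \lambda_i \geq 0, \ \sum_i \lambda_i = 1,
\]
then $M(y)\succeq0$ and $\rank M(y)<\infty$.

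For positive semidefiniteness, I would apply Lemma \ref{lem:mk}(2) to express, for any $p\in \R\ax$,
\[
\vv{p}^* M(y)\vv{p} = {\vv 1}^*M(y)\vv{p^*p} = (p^*p)(y) = \sum_{i=1}^N \lambda_i \Tr\bigl(p(\ushort A^{(i)})^* p(\ushort A^{(i)})\bigr).
\]
Each summand is nonnegative since the matrix $p(\ushort A^{(i)})^* p(\ushort A^{(i)})$ is positive semidefinite and $\Tr$ is the normalized (real, complex, or quaternionic) trace of such a matrix; summing with nonnegative weights yields $\vv{p}^*M(y)\vv p\geq 0$.

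For finite rank, I would use that $\rank M(y) = \dim \ran \varphi_M = \dim(\R\ax/I)$, where $I=\ker\varphi_M$ is the ideal from \eqref{eq:momKer}. By Proposition \ref{lem:kerideal}, $p\in I$ iff $\langle M\vv p,\vv p\rangle=0$, which combined with the representation above becomes
\[
\sum_{i=1}^N \lambda_i \Tr\bigl(p(\ushort A^{(i)})^* p(\ushort A^{(i)})\bigr)=0.
\]
Since each term is nonnegative, this forces $p(\ushort A^{(i)})=0$ for every $i$ with $\lambda_i>0$. Consequently the evaluation map
\[
\Phi:\R\ax \longrightarrow \bigoplus_{i=1}^N \R^{t\times t}, \qquad p \mapsto \bigl(p(\ushort A^{(1)}),\dots,p(\ushort A^{(N)})\bigr)
\]
has kernel contained in $I$ (and in fact equal to $I$ after discarding the indices with $\lambda_i=0$), so it induces an injection $\R\ax/I\hookrightarrow \bigoplus_i \R^{t\times t}$. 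Therefore
\[
\rank M(y) = \dim(\R\ax/I) \leq Nt^2 < \infty.
\]

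There is no real obstacle here; the only subtlety is the bookkeeping identification $\rank M(y)=\dim(\R\ax/I)$, which is already implicit in the discussion preceding Lemma \ref{lem:mk} together with Proposition \ref{lem:kerideal}, and the observation that a sum of nonnegative traces vanishes iff each summand vanishes, which reduces membership in $I$ to vanishing at the finitely many matrix tuples $\ushort A^{(i)}$.
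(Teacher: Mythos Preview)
Your proof is correct and follows essentially the same route as the paper. For positive semidefiniteness the arguments coincide; for finite rank the paper argues directly that $\ker\eps_{\ushort A}\subseteq\ker M(y)$ (without invoking Proposition \ref{lem:kerideal}) to get $\rank M(y)\leq t^2$ for a single tuple and then remarks that the convex-combination case is analogous, whereas you use Proposition \ref{lem:kerideal} to identify $I$ exactly with the kernel of the simultaneous evaluation map---a minor variation yielding the same bound $\rank M(y)\leq Nt^2$.
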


\begin{proof}
If $y_ w=\Tr( w(\ushort A))$ for some $\ushort A=(A_1,\dots,A_n)\in(\sym \R^{t\times t})^n$, 
then $$L(p)=\sum_ w p_ w y_ w=\sum_ w p_ w \Tr( w(\ushort A))=
 \Tr(p(\ushort A)).$$
Hence 
\begin{align*}
{\vv p}^*M(y)\vv{p}&=L(p^*p)=\Tr(p^*(\ushort A)p(\ushort A))\geq0.
\end{align*} 
for all $p \in \R\ax$. 

Further, the tracial moment matrix $M(y)$ has rank at most $t^2$.
This can be seen as follows: 
$M$ induces a bilinear map 
$$\Phi:\R \ax\rightarrow\R \ax^*,\quad p\mapsto\Big(q\mapsto \Tr\big((q^*p)(\ushort A)\big)\Big),$$
where $\R \ax^*$ is the dual space of $\R \ax$. This implies 
$$\rank M=\dim (\ran\Phi)=\dim(\R \ax/\ker\Phi).$$
The kernel of the evaluation map 
$\eps_{\ushort A}:\R\ax\rightarrow\R^{t\times t}$, $p\mapsto p(\ushort A)$
is a subset of $\ker \Phi$. In particular, 
\[\dim(\R\ax/\ker\Phi)\leq \dim(\R\ax/\ker\eps_{\ushort A})=\dim(\ran \eps_{\ushort A})\leq t^2. \]
The same holds true for each convex combination $y_w=\sum_i \lambda_i \Tr( w(\ushort A^{(i)}))$.

The converse is Theorem \ref{thm:finiterank}.
\end{proof}

\begin{dfn}\label{defflat}
Let $A\in \sym\R^{t\times t}$ be given. A (symmetric) extension of $A$ is a matrix 
$\tilde A\in \sym\R^{(t+s)\times (t+s)}$ of the form
$$\tilde A=\begin{pmatrix} A &B \\ B^* & C\end{pmatrix} $$
for some $B\in \R^{t\times s}$ and $C\in \R^{s\times s}$.
Such an extension is \emph{flat} if $\rank A=\rank\tilde A$,
or, equivalently, if $B = AW$  and $C = W^*AW$ for some matrix $W$.
\end{dfn}

The kernel of a flat extension $M_k$ of a tracial moment matrix $M_{k-1}$ 
has some (truncated) \emph{ideal-like properties} as 
shown in the following lemma.  

\begin{lemma}\label{lem:flatrideal}
Let $f\in \R \ax$ with $\deg f\leq k-1$ and let $M_k$ be a flat extension of $M_{k-1}$. 
If $f\in\ker M_k$ then $fX_i,X_if\in \ker M_k$.
\end{lemma}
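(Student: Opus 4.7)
The plan is to prove $fX_i \in \ker M_k$ by direct computation (splitting on the degree of the row index and using flatness only in the high-degree case), and then deduce $X_if \in \ker M_k$ from involution-invariance of $\ker M_k$ together with the identity $X_if = (f^*X_i)^*$.

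For $fX_i$, the crucial observation is that cyclic equivalence makes the ``right multiplication by $X_i$'' act symmetrically on row/column indices: for words $u, v$ with $\deg u, \deg(vX_i)\leq k$, one has $y_{u^*vX_i}=y_{X_iu^*v}$, so $(M_k)_{u,vX_i}=(M_k)_{uX_i,v}$ whenever both indices are legitimate. Writing $f=\sum_v f_v v$ with $\deg v\leq k-1$, this gives, for every $u$ with $\deg u\leq k-1$,
\[
(M_k\vv{fX_i})_u=\sum_v f_v\,y_{u^*vX_i}=\sum_v f_v(M_k)_{uX_i,v}=(M_k\vv{f})_{uX_i}=0,
\]
since $f\in\ker M_k$ and $\deg(uX_i)\leq k$. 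For the remaining case $\deg u=k$, I invoke flatness: because $\rank M_k=\rank M_{k-1}$, every column of $M_k$ indexed by a degree-$k$ word is a linear combination of columns indexed by words of degree $\leq k-1$; say $C_u=\sum_r\alpha_r C_r$. Symmetry of $M_k$ transfers this dependency to the corresponding rows, yielding $(M_k\vv{fX_i})_u=\sum_r\alpha_r(M_k\vv{fX_i})_r$, each term vanishing by the previous case. Hence $M_k\vv{fX_i}=0$.

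To handle $X_if$, I would first verify that $\ker M_k$ is closed under $*$ (in degrees $\leq k$): combining $y_w=y_{w^*}$ with cyclicity gives $(M_k\vv{f^*})_u=(M_k\vv{f})_{u^*}$ for all $u$ with $\deg u\leq k$, so $f\in\ker M_k$ forces $f^*\in\ker M_k$. Since $\deg f^*\leq k-1$, the argument above applies to $f^*$ and gives $f^*X_i\in\ker M_k$; applying $*$-invariance once more yields $X_if=(f^*X_i)^*\in\ker M_k$.

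The main obstacle is the degree-$k$ case for $fX_i$, where the natural manipulation pushes the row index to $uX_i$ of degree $k+1$, outside the scope of $M_k$; overcoming this is exactly where the flat extension hypothesis is used, via the passage from column dependencies to row dependencies in the symmetric matrix $M_k$. Everything else is routine bookkeeping with the tracial symmetries \eqref{cyc} and \eqref{cycstar} and Lemma \ref{lem:mk}.
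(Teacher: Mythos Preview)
Your argument is correct and follows essentially the same route as the paper's proof: both compute $(M_k\vv{fX_i})_u=(M_k\vv f)_{uX_i}=0$ for $\deg u\leq k-1$ via cyclicity, and then invoke flatness to handle the remaining rows. The paper packages the flatness step as the identity $\ker M_k=\ker\begin{pmatrix}M_{k-1}&B\end{pmatrix}$, which is exactly your ``rows inherit the column dependencies'' observation in different clothing; and for $X_if$ the paper simply repeats the direct calculation (using $y_{v^*X_iw}=y_{(X_iv)^*w}$) rather than passing through $*$-invariance of $\ker M_k$, but your detour via $f^*X_i$ is equally valid.
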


\begin{proof}
Let $f=\sum_w f_w w$. Then for $v\in \ax_{k-1}$, we have
\begin{equation}\label{eqker}
(M_k\vv{fX_i})_v =\sum_w f_w y_{v^*wX_i}= 
\sum_w f_w y_{(vX_i)^*w}=(M_k \vv f)_{vX_i}=0.
\end{equation}

The matrix $M_k$ is of the form $M_k=\mato M_{k-1}&B\\B^*&C\matc$. 
Since $M_k$ is a flat extension, 
$\ker M_k=\ker \begin{pmatrix} M_{k-1}&B\end{pmatrix}$. 
Thus by \eqref{eqker},  
$fX_i\in \ker \begin{pmatrix} M_{k-1}&B\end{pmatrix}=\ker M_k$. 
For $X_if$ we obtain analogously that
$$(M_k\vv{X_if})_v =\sum_w f_w y_{v^*X_iw}=
\sum_w f_w y_{(X_iv)^*w}=(M_k \vv f)_{X_iv}=0$$
for $v\in \ax_{k-1}$, which implies $X_if\in \ker M_k$.
\end{proof}

We are now ready to prove the tracial version of the flat extension theorem of
Curto and Fialkow \cite{cfflat}.

\begin{thm}\label{thm:flatextension}
Let $y=(y_w)_{\leq 2k}$ be a truncated tracial sequence of order $2k$. If 
$\rank M_k(y)=\rank M_{k-1}(y)$, then there exists
a unique tracial extension $\tilde y=(\tilde y_w)_{\leq 2k+2}$ of $y$ such that 
$M_{k+1}(\tilde y)$ is a flat extension of $M_k(y)$.
\end{thm}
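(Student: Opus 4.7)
My plan is to extend the operator-theoretic construction from the proof of Theorem~\ref{thm:finiterank} to the truncated setting. Set $E := \R\ax_{\leq k-1}/\ker M_{k-1}$; since $M_{k-1}\succeq 0$, this is a real Hilbert space of dimension $\rank M_{k-1}$, with inner product $\langle\bar p,\bar q\rangle_E := \vv q^*M_{k-1}\vv p$. The rank hypothesis forces $\ker M_{k-1} = \ker M_k\cap\R\ax_{\leq k-1}$ and yields a canonical isomorphism $E\cong\R\ax_{\leq k}/(\ker M_k\cap\R\ax_{\leq k})$, so the symmetric right-multiplication operators $\hat X_i\colon\bar p\mapsto\overline{pX_i}$ of Theorem~\ref{thm:finiterank} restrict to symmetric operators on $E$. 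Writing $\hat w$ for the operator attached to a word $w$, and $\bar w:=\hat w\bar 1\in E$, I set
\[
\tilde y_w := \langle\bar w,\bar 1\rangle_E \quad\text{for every word $w$ with $|w|\leq 2k+2$.}
\]

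I would first verify that $\tilde y$ extends $y$: for $|w|\leq 2k$, splitting $w=pq$ with $|p|,|q|\leq k$ gives $\tilde y_w = \langle\bar p,\overline{q^*}\rangle_E = y_{qp}=y_w$, where the last equality is the cyclic property of $y$. The $(u,v)$-entry of $M_{k+1}(\tilde y)$ unfolds to $\langle\overline{u^*},\overline{v^*}\rangle_E$, displaying $M_{k+1}(\tilde y)$ as the Gram matrix of $\{\overline{u^*}:|u|\leq k+1\}$ in $E$. Hence $M_{k+1}(\tilde y)\succeq 0$ with rank at most $\dim E=\rank M_k$, and since $M_k$ sits as a principal submatrix the rank equals $\rank M_k$: the extension is flat.

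The delicate point is that $\tilde y$ must be a tracial sequence. Star symmetry $\tilde y_w=\tilde y_{w^*}$ is immediate from symmetry of the real inner product together with $(\hat w)^*=\hat{w^*}$ (a consequence of each $\hat X_i$ being symmetric). For cyclic symmetry I would introduce the $\R$-linear involution $\Phi\colon E\to E$, $\bar p\mapsto\overline{p^*}$, well-defined because by Proposition~\ref{lem:kerideal} the ideal $\ker M_{k-1}$ is closed under the star. A short computation using $\langle\bar p,\bar q\rangle_E=y_{q^*p}$ shows
\[
\langle\Phi\bar u,\bar v\rangle_E - \langle\bar u,\Phi\bar v\rangle_E = y_{v^*u^*}-y_{vu} = 0,
\]
the vanishing combining the star identity $y_{vu}=y_{u^*v^*}$ with the cyclic identity $u^*v^*\csim v^*u^*$. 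So $\Phi$ is self-adjoint on $E$, and cyclic symmetry of $\tilde y$ drops out:
\[
\tilde y_{uv} = \langle\bar u,\Phi\bar v\rangle_E = \langle\Phi\bar u,\bar v\rangle_E = \langle\bar v,\Phi\bar u\rangle_E = \tilde y_{vu}.
\]

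Uniqueness follows from the flat-extension formalism. If $\tilde y'$ is another tracial extension with $M_{k+1}(\tilde y')$ flat over $M_k$, then for each word $w$ of degree $k$ the rank hypothesis furnishes $r_w\in\R\ax_{\leq k-1}$ with $w-r_w\in\ker M_k\subseteq\ker M_{k+1}(\tilde y')$; Lemma~\ref{lem:flatrideal} applied to $M_{k+1}(\tilde y')$ as a flat extension of $M_k$ forces $(w-r_w)X_i\in\ker M_{k+1}(\tilde y')$, pinning down every column of $M_{k+1}(\tilde y')$ indexed by a word of degree $k+1$ as a fixed linear combination of columns of $M_k$. Hence $\tilde y'=\tilde y$. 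The main obstacle I expect is the cyclic invariance of $\tilde y$ above degree $2k$, since the vector state $T\mapsto\langle T\bar 1,\bar 1\rangle_E$ on the algebra generated by $\hat X_1,\dots,\hat X_n$ is not tracial in general; the self-adjointness of $\Phi$ is the clean algebraic device converting the traciality of $y$ on degrees $\leq 2k$ and the star-closedness of $\ker M_{k-1}$ into cyclic invariance of $\tilde y$ up to degree $2k+2$.
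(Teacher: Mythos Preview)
Your operator-theoretic route is a genuine alternative to the paper's direct block-matrix construction, and the uniqueness argument matches the paper's, but there is a real gap in the cyclic-invariance step. Your chain
\[
\tilde y_{uv} = \langle\bar u,\Phi\bar v\rangle_E = \langle\Phi\bar u,\bar v\rangle_E = \langle\bar v,\Phi\bar u\rangle_E = \tilde y_{vu}
\]
stands or falls with the outer equalities. Unpacking $\tilde y_{uv}=\langle\hat v\hat u\bar 1,\bar 1\rangle_E=\langle\bar u,\hat{v^*}\bar 1\rangle_E$ shows that what you need is $\Phi(\hat v\bar 1)=\hat{v^*}\bar 1$. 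For $|v|\leq k$ this is immediate, since $\bar v$ is literally the class of $v$; but cyclic invariance in degree $2k+2$ forces $|v|$ up to $2k+1$, and then $\hat v\bar 1\in E$ is no longer the class of $v$, so applying the star map $\Phi$ defined on $E$ need not a priori produce $\hat{v^*}\bar 1$. Self-adjointness of $\Phi$ by itself does not close this. What is missing is: introduce the \emph{left}-multiplication operators $\hat X_i'\colon\bar p\mapsto\overline{X_ip}$ (well-defined by Lemma~\ref{lem:flatrideal}), check the intertwining $\Phi\hat X_i\Phi=\hat X_i'$, and verify $\hat X_i'\hat X_j=\hat X_j\hat X_i'$ on $E$. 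The last identity is not formal---it reduces, via self-adjointness of both operators, to $y_{q^*X_ipX_j}=y_{X_jq^*X_ip}$ for $|p|,|q|\leq k-1$, which is precisely the traciality of $y$ in degree $\leq 2k$. With these in hand an induction gives $\Phi(\hat v\bar 1)=\hat{v^*}\bar 1$ for all $v$, and your chain goes through.

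The paper sidesteps this by working with matrix entries: it builds $B$ and $C$ explicitly from the reductions $v'\mapsto r\in\spann b$ and then proves $(M_{k+1})_{X_iw}=(M_{k+1})_{wX_i}$ by a short case analysis on $\deg u,\deg v$ in the splitting $w=u^*v$. Conceptually the same traciality input is being used, but at the level of entries rather than operators; this is less elegant but avoids the need to control $\Phi$ on operator-generated vectors. Your approach, once patched, has the virtue of making the parallel with Theorem~\ref{thm:finiterank} transparent.

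A minor point: the theorem does not assume $M_{k-1}\succeq 0$, so $E$ carries only a nondegenerate symmetric bilinear form, not an inner product. Your construction and the flatness conclusion survive (a Gram matrix for a nondegenerate form still has rank at most $\dim E$), but the positive-semidefiniteness of $M_{k+1}(\tilde y)$ should be stated conditionally on $M_k\succeq 0$, as the paper does.
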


\begin{proof}
Let $M_k:=M_k(y)$.
We will construct a flat extension $M_{k+1}:=\mato M_k&B\\B^*&C\matc$ 
such that $M_{k+1}$ is a tracial moment matrix. Since 
$M_k$ is a flat extension of $M_{k-1}(y)$ we can find a basis $b$ of 
$\ran M_k$ consisting of columns of $M_k$ labeled by $w$ with $\deg w\leq k-1$.
Thus the range of $M_k$ is completely determined by the range of $M_k|_{\spann b}$, 
i.e., for each $p\in \R \ax$ with $\deg p\leq k$ there exists a \emph{unique} 
$r\in \spann b$ such that
$M_k\vv p=M_k \vv r$; equivalently, $p-r\in \ker M_k$. 

Let $v\in\ax$, $\deg v=k+1$, $v=v'X_i$ for some $i\in \{1,\dots,n\}$ and $v'\in \ax$ 
with $\deg v'=k$. 
For $v'$ there exists an $r\in \spann b$ such that $v'-r\in \ker M_k$. 

\emph{If} there exists a flat extension $M_{k+1}$, then by Lemma \ref{lem:flatrideal},
from $v'-r\in \ker M_k\subseteq\ker M_{k+1}$ it
follows that $(v'-r)X_i\in \ker M_{k+1}$. Hence the desired flat extension 
has to satisfy 
\begin{equation}\label{eqflatcond}
	M_{k+1}\vv{v}=M_{k+1}\vv{rX_i}=M_k\vv{rX_i}.
\end{equation}
Therefore we define 
\begin{equation}\label{eq:sabinedefinesB}
B\vv{v}:=M_k\vv{rX_i}.
\end{equation}

More precisely, let $(w_1,\dots,w_\ell)$ be the 
basis of $M_k$, i.e., $(M_k)_{i,j}=w_i^*w_j$. Let $r_{w_i}$
be the unique element in $\spann  b$ with $ w_i-r_{ w_i}\in \ker M_k$.
Then $B=M_kW$ with 
$W=(r_{ w_1X_{i_1}},\dots,r_{ w_\ell X_{i_\ell}})$ and we define 
\begin{equation}\label{eq:sabinedefinesC}
C:=W^*M_kW. 
\end{equation}
Since the $r_{ w_i}$ are uniquely determined, 
\begin{equation}\label{eq:sabinedefinesMk+1}
M_{k+1}=\mato M_k&B\\B^*&C\matc
\end{equation} 
is well-defined. The constructed $M_{k+1}$ is a flat extension of 
$M_k$, and 
$M_{k+1}\succeq0$ if and only if $M_k\succeq0$, cf.~\cite[Proposition 2.1]{cfflat}.
Moreover, once $B$ is chosen, there is only one $C$ making
$M_{k+1}$ as in \eqref{eq:sabinedefinesMk+1} a flat extension of $M_k$. 
This follows from general
linear algebra, see e.g.~\cite[p.~11]{cfflat}. Hence $M_{k+1}$ is the 
\emph{only} candidate for a flat extension.

Therefore we are done if $M_{k+1}$ is a tracial moment matrix, i.e., 
\begin{equation}
 (M_{k+1})_w=(M_{k+1})_v \;\text{ whenever}\; w\csim v. \label{mm}
\end{equation}
To show this we prove that $(M_{k+1})_{X_iw}=(M_{k+1})_{wX_i}$. Then \eqref{mm} 
follows recursively. 

Let $w=u^*v$. If $\deg u,\deg vX_i\leq k$ there is nothing to show since 
$M_k$ is a tracial moment matrix. If $\deg u\leq k$ and $\deg vX_i=k+1$ there exists
an $r\in \spann  b$ such that $r-v\in \ker M_{k-1}$, and by Lemma \ref{lem:flatrideal},
also $vX_i-rX_i\in \ker M_k$. Then we get
\begin{align*}
(M_{k+1})_{u^*vX_i}&=\vv{u}^*M_{k+1}\vv{vX_i}=\vv{u}^*M_{k+1}\vv{rX_i}
=\vv{u}^*M_{k}\vv{rX_i}\\
&=(M_k)_{u^*rX_i}
=(M_k)_{X_iu^*r}
=(M_k)_{(uX_i)^*r}\\
&\overset{(\ast)}{=}{\vv{uX_i}}^*M_{k+1}\vv{v}=(M_{k+1})_{(uX_i)^*v}
=(M_{k+1})_{X_iw},
\end{align*}
where equality $(\ast)$ holds by \eqref{eqflatcond} which implies Lemma 
\ref{lem:flatrideal} by construction.

If $\deg u=\deg vX_i=k+1$, write $u=X_ju'$. Further, there exist $s,r\in \spann b$ with 
$u'-s\in \ker M_{k-1}$ and $r-v\in \ker M_{k-1}$. Then 
\begin{align*}
(M_{k+1})_{u^*vX_i}&=\vv{X_ju'}^*M_{k+1}\vv{vX_i}=\vv{X_js}^*M_{k}\vv{rX_i}\\
&=(M_k)_{s^*X_jrX_i}=(M_k)_{(sX_i)^*(X_jr)}\\
&\overset{(*)}{=}\vv{uX_i}^*M_{k+1}\vv{X_jv}=(M_{k+1})_{(uX_i)^*X_jv}
=(M_{k+1})_{X_i w}.
\end{align*}

Finally, the construction of $\tilde y$ from $M_{k+1}$ is clear. 
\end{proof}

\begin{cor}\label{cor:flat}
Let $y=(y_ w)_{\leq 2k}$ be a truncated tracial sequence. If 
$M_k(y)$ is positive semidefinite
and $M_k(y)$ is a flat extension of $M_{k-1}(y)$, then $y$ 
is a truncated tracial moment sequence.
\end{cor}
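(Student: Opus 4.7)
The plan is to iterate Theorem~\ref{thm:flatextension} to build an infinite tracial moment matrix from $M_k(y)$, and then invoke Theorem~\ref{thm:finiterank} to extract a tracial moment representation.

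First, since $M_k(y)$ is a flat extension of $M_{k-1}(y)$, Theorem~\ref{thm:flatextension} produces a unique tracial extension of $y$ to order $2k+2$ whose moment matrix $M_{k+1}$ is a flat extension of $M_k(y)$; in particular $\rank M_{k+1} = \rank M_k(y)$. This places us back in the hypothesis of Theorem~\ref{thm:flatextension} with $k$ replaced by $k+1$, so we may flatly extend again to order $2k+4$, and so on. By the uniqueness clause of Theorem~\ref{thm:flatextension}, the successive extensions agree wherever they overlap, so they assemble into a single infinite tracial sequence $\tilde y = (\tilde y_w)_{w\in\ax}$ extending $y$ with $M_{k+j}(\tilde y)$ a flat extension of $M_{k+j-1}(\tilde y)$ for every $j\geq 1$.

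Next I would check that $M(\tilde y)$ is positive semidefinite and of finite rank. Positivity is a pointwise statement: for any $p\in\R\ax$, choosing $\ell\geq\max\{k,\deg p\}$ gives $\vv{p}^*M(\tilde y)\vv{p} = \vv{p}^*M_\ell(\tilde y)\vv{p}\geq 0$, because a flat extension of a positive semidefinite matrix is itself positive semidefinite (cf.\ \cite[Proposition 2.1]{cfflat}, used already in the proof of Theorem~\ref{thm:flatextension}) and $M_k(y)\succeq 0$ by hypothesis. For finite rank, flatness at each stage yields $\rank M_{k+j}(\tilde y) = \rank M_k(y) =: t$ for all $j\geq 0$, which forces $\rank M(\tilde y) = t<\infty$.

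With $M(\tilde y)\succeq 0$ of rank $t$, Theorem~\ref{thm:finiterank} supplies vectors $\ushort A^{(i)}$ of symmetric matrices and weights $\lambda_i\geq 0$ with $\sum_i\lambda_i=1$ such that $\tilde y_w = \sum_i \lambda_i\,\Tr(w(\ushort A^{(i)}))$ for all $w\in\ax$; restricting to words of degree $\leq 2k$ yields the required representation of $y$. The main point to watch is the inductive bookkeeping: one must verify that each freshly constructed $M_{k+j}(\tilde y)$ really is a flat extension of $M_{k+j-1}(\tilde y)$ in the sense required by Theorem~\ref{thm:flatextension} (immediate from the construction in equations \eqref{eq:sabinedefinesB}--\eqref{eq:sabinedefinesMk+1}), and that the uniqueness statement actually glues the successive extensions into a single consistent $\tilde y$ rather than a branching family. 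Once this is in place, the corollary reduces to a direct application of the finite-rank representation theorem.
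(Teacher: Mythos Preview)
Your proposal is correct and follows essentially the same route as the paper's own proof: iterate Theorem~\ref{thm:flatextension} to produce an infinite tracial sequence $\tilde y$ with $M(\tilde y)\succeq 0$ of finite rank, then invoke Theorem~\ref{thm:finiterank}. The paper's proof is simply more terse, compressing your inductive construction and the positivity/rank checks into a single sentence.
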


\begin{proof}
	By Theorem \ref{thm:flatextension} we can extend $M_k(y)$ inductively
	to a positive semidefinite moment matrix $M(\tilde y)$ with 
	$\rank M(\tilde y)=\rank M_k(y)<\infty$. Thus $M(\tilde y)$ has finite 
	rank and by Theorem \ref{thm:finiterank}, there exists a tracial moment 
	representation 
	of $\tilde y$. Therefore $y$ is a truncated tracial moment sequence.
\end{proof}

The following two corollaries give characterizations of tracial 
moment matrices coming from tracial moment sequences.

\begin{cor}\label{cor:flatall}
Let $y=(y_ w)$ be a tracial sequence. Then $y$ 
is a tracial moment sequence if and only if $M(y)$ is positive semidefinite and there
exists some $N\in \N$ such that $M_{k+1}(y)$ is a flat extension of 
$M_{k}(y)$ for all $k\geq N$.
\end{cor}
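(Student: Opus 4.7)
The plan is to derive both implications from the finite-rank characterization (Corollary \ref{cor:finite}) by relating flatness for $k\geq N$ to the stabilization of the ranks $\rank M_k(y)$.

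For the forward direction, Corollary \ref{cor:finite} gives $M(y)\succeq 0$ and $r:=\rank M(y)<\infty$. Since $M_k(y)$ is a principal submatrix of $M_{k+1}(y)$, the sequence $(\rank M_k(y))_k$ is nondecreasing and bounded by $r$, hence eventually constant from some index $N$ on. By Definition \ref{defflat}, the equality $\rank M_{k+1}(y)=\rank M_k(y)$ for $k\geq N$ is precisely the claimed flatness.

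For the converse, given flatness for all $k\geq N$, I would aim to show $\rank M(y)=\rank M_N(y)<\infty$ and then invoke Corollary \ref{cor:finite}. Only the inequality $\rank M(y)\leq\rank M_N(y)$ is non-trivial. First, flatness forces $\ker M_k(y)\subseteq\ker M_{k+1}(y)$ under the natural inclusion $\R\ax_{\leq k}\hookrightarrow\R\ax_{\leq k+1}$: writing $M_{k+1}(y)=\mato M_k(y)&B\\B^*&C\matc$ with $B=M_k(y)W$ from Definition \ref{defflat}, one computes $M_{k+1}(y)\mato\vv{p}\\0\matc=\mato M_k(y)\vv{p}\\W^*M_k(y)\vv{p}\matc$, which vanishes whenever $M_k(y)\vv{p}=0$. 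Iterating Lemma \ref{lem:flatrideal} then gives, by induction on $\deg v$, that $pv\in\ker M_{N+\deg v}(y)$ for every $p\in\ker M_N(y)$ and every word $v$. Consequently, for any word $u$, Lemma \ref{lem:mk} together with the cyclic equivalence condition \eqref{cyc} yields
\[
\big(M(y)\vv{p}\big)_u=(u^*p)(y)=(pu^*)(y)=\big(M_{N+\deg u}(y)\vv{pu^*}\big)_\emptyset=0,
\]
so $\ker M_k(y)\subseteq\ker\varphi_M$ for every $k\geq N$, where $\varphi_M:\R\ax\to\R^\N$ is $q\mapsto M(y)\vv{q}$. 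Finally, by flatness every word $w$ of degree $>N$ is congruent modulo $\ker M_{\deg w}(y)$ to a linear combination of words of degree $\leq N$, so the same combination holds among the corresponding columns of $M(y)$, proving $\rank M(y)\leq\rank M_N(y)$.

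The main obstacle is this propagation step: finite-level flatness must be upgraded to the statement that kernel relations at level $N$ annihilate \emph{every} row of $M(y)$, not just the finitely many rows visible in $M_N(y)$, and this is precisely what Lemma \ref{lem:flatrideal} combined with cyclic equivalence accomplishes. Once established, both directions follow almost mechanically from Corollary \ref{cor:finite}.
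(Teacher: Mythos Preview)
Your argument is correct. The forward direction matches the paper's. For the converse, however, the paper takes a different route: it invokes the \emph{uniqueness} clause of Theorem~\ref{thm:flatextension} to argue that, since each $M_{k+1}(y)$ for $k\geq N$ is a flat tracial extension of $M_k(y)$, it must coincide with the unique such extension constructed there; hence the iterated extension of $(y_w)_{\leq 2N}$ is $y$ itself, $\rank M(y)=\rank M_N(y)$, and Theorem~\ref{thm:finiterank} applies. Your approach bypasses Theorem~\ref{thm:flatextension} altogether and instead propagates kernel relations directly, using Lemma~\ref{lem:flatrideal} together with the cyclic invariance~\eqref{cyc} to show $(M(y)\vv{p})_u=(pu^*)(y)=0$ for every $p\in\ker M_k(y)$, $k\geq N$. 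This is more self-contained---it avoids the nontrivial construction and well-definedness verification in Theorem~\ref{thm:flatextension}---at the modest price of the explicit column computation. Both arguments terminate in the finite-rank criterion, and neither is substantially shorter.
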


\begin{proof}
If $y$ is a tracial moment sequence then by Corollary \ref{cor:finite},
$M(y)$ is positive semidefinite and has finite rank $t$. Thus there exists an 
$N\in \N$ such that $t=\rank M_N(y)$. 
In particular, $\rank M_k(y)=\rank M_{k+1}(y)=t$ for all $k\geq N$, i.e., $M_{k+1}(y)$ 
is a flat extension of $M_k(y)$ for all $k\geq N$.

For the converse, let $N$ be given such that $M_{k+1}(y)$ is a flat extension of 
$M_{k}(y)$ for all $k\geq N$. By Theorem \ref{thm:flatextension}, the (iterated) 
unique extension $\tilde y$ of $(y_w)_{\leq 2k}$ for $k\geq N$ is equal to $y$.
Otherwise there exists a flat extension $\tilde y$ of $(y_w)_{\leq 2\ell}$ 
for some $\ell\geq N$ such that $M_{\ell+1}(\tilde y)\succeq 0$ is a flat extension
of $M_\ell(y)$ and $M_{\ell+1}(\tilde y)\neq M_{\ell+1}(y)$ contradicting the 
uniqueness of the extension in Theorem \ref{thm:flatextension}.

Thus $M(y)\succeq 0$ and $\rank M(y)=\rank M_N(y)<\infty$. Hence by Theorem \ref{thm:finiterank}, 
$y$ is a tracial moment sequence.
\end{proof}

\begin{cor}\label{cor:flatt}
Let $y=(y_ w)$ be a tracial sequence. Then $y$ 
has a tracial moment representation with matrices of size at most 
$t:=\rank M(y)$ if 
$M_N(y)$ is positive semidefinite and $M_{N+1}(y)$ is 
a flat extension of $M_{N}(y)$ for some $N\in \N$ with $\rank M_N(y)=t$.
\end{cor}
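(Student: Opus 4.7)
The plan is to bootstrap the single flatness hypothesis at level $N$ into flatness at every higher level, deduce that the full moment matrix $M(y)$ is positive semidefinite of finite rank $t$, and then invoke Theorem \ref{thm:finiterank} to obtain a representation whose matrix size is controlled by $t$.

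First, I would pin down the ranks of all higher truncations. Since each $M_k(y)$ is a principal submatrix of $M(y)$, the ranks satisfy
\[
 t \;=\; \rank M_N(y) \;\leq\; \rank M_{N+k}(y) \;\leq\; \rank M(y) \;=\; t
\]
for every $k\geq 0$, so $\rank M_{N+k}(y) = t$ throughout. Equivalently, $M_{k+1}(y)$ is a flat extension of $M_k(y)$ for every $k\geq N$.

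Next, I would propagate positive semidefiniteness upward. By hypothesis $M_N(y)\succeq 0$, and $M_{N+1}(y)$ is a flat extension of $M_N(y)$ that is automatically tracial because $y$ is a tracial sequence. The uniqueness clause of Theorem \ref{thm:flatextension} therefore forces $M_{N+1}(y)$ to coincide with the canonical flat extension constructed in that proof, which preserves positive semidefiniteness (cf.~\cite[Proposition 2.1]{cfflat}). Iterating this argument yields $M_k(y)\succeq 0$ for every $k\geq N$, and hence $M(y)\succeq 0$.

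Finally, $M(y)$ is positive semidefinite with finite rank $t$, so Theorem \ref{thm:finiterank} delivers $\lambda_i\in\R_{\geq 0}$ with $\sum_i\lambda_i=1$ and tuples $\ushort A^{(i)}$ of symmetric matrices of size at most $t$ with $y_w = \sum_i \lambda_i \Tr(w(\ushort A^{(i)}))$. The one step that requires genuine care is the uniqueness invocation: one must verify that $M_{N+1}(y)$ really is a \emph{tracial} flat extension of $M_N(y)$ in the sense of Theorem \ref{thm:flatextension}, so that the canonical flat extension (and its positive semidefiniteness) applies; the rank propagation and the closing application of Theorem \ref{thm:finiterank} are then essentially automatic.
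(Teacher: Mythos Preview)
Your proof is correct and follows essentially the same route as the paper: establish that all higher truncations have rank $t$, propagate positive semidefiniteness upward to conclude $M(y)\succeq 0$ with finite rank, and invoke Theorem \ref{thm:finiterank} (the paper routes through Corollary \ref{cor:flatall}, which amounts to the same thing). One simplification: you need not invoke the uniqueness clause of Theorem \ref{thm:flatextension} for the positivity step, since \emph{any} flat extension of a positive semidefinite matrix is automatically positive semidefinite (this is \cite[Proposition 2.1]{cfflat}, quoted in the proof of Theorem \ref{thm:flatextension}), so once the rank equalities give flatness at every level $k\geq N$, positivity of $M_k(y)$ follows directly by induction from $M_N(y)\succeq 0$.
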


\begin{proof}
Since $\rank M(y)=\rank M_N(y)=t,$
each $M_{k+1}(y)$ with $k\geq N$ is a flat extension of $M_k(y)$.
As $M_N(y)\succeq0$, all $M_k(y)$ 
are positive semidefinite. 
Thus $M(y)$ is also positive semidefinite. Indeed, let
$p\in\R\ax$ 
and $\ell=\max\{\deg p,N\}$. Then 
${\vv p}^*M(y)\vv p={\vv p}^*M_\ell(y)\vv p\geq0$.

Thus by Corollary \ref{cor:flatall}, $y$ is a tracial moment sequence. The 
representing matrices can be chosen to be of size at most $\rank M(y)=t$. 
\end{proof}

\section{Positive definite moment matrices and trace-positive polynomials}\label{sec:poly}

In this section we explain how the representability of \emph{positive definite}
tracial moment matrices relates 
to sum of hermitian squares representations of
trace-positive polynomials. We start by introducing some terminology.

An element of the form $g^*g$ for some $g\in\R\ax$ is called a 
\textit{hermitian square} and we denote the set of all sums of hermitian 
squares by 
$$\Sigma^2=\{f\in\R\ax\mid f=\sum g_i^*g_i \;\text{for some}\; g_i\in\R\ax\}.$$
A polynomial $f\in \R \ax$ is \emph{matrix-positive} if $f(\ushort A)$ is positive 
semidefinite for all tuples $\ushort A$ of symmetric matrices 
$A_i\in \sym \R^{t\times t}$, $t\in\N$. Helton \cite{helton} proved that $f\in\R\ax$ is 
matrix-positive if and only if $f\in \Sigma^2$ by solving a non-commutative 
moment problem; see also \cite{McC}.

We are interested in a different type of positivity induced by
the trace.
\begin{dfn}\label{def:trpos}
A polynomial $f\in \R \ax$ is called \emph{trace-positive} if 
$$\Tr(f(\ushort A))\geq 0\;\text{ for all}\; \ushort A\in(\sym\R^{t\times t})^n,\; t\in\N.$$
\end{dfn}

Trace-positive polynomials are intimately connected to deep open
problems from 
e.g.~operator algebras (Connes' embedding conjecture \cite{ksconnes})
and mathematical physics (the Bessis-Moussa-Villani conjecture 
\cite{ksbmv}), so a good understanding of this set is needed.
A distinguished subset is formed by sums of hermitian squares and
commutators.

\begin{dfn}
Let $\Theta^2$ be the set of all polynomials which are cyclically 
equivalent to a sum of hermitian squares, i.e.,
\begin{equation}\label{eq:defcycsohs}
\Theta^2=\{f\in \R\ax\mid f\csim\sum g_i^*g_i\;\text{for some}\;g_i \in\R\ax\}.
\end{equation}
\end{dfn}

Obviously, all $f\in \Theta^2$ are trace-positive. However, in contrast to 
Helton's sum of squares theorem mentioned above, the following
non-commutative version of the well-known Motzkin polynomial \cite[p.~5]{Mar} shows that 
a trace-positive polynomial need not be a member of $\Theta^2$ \cite{ksconnes}. 

\begin{example}\label{motznc}
Let $$M_{\rm nc}=XY^4X+YX^4Y-3XY^2X+1\in\R\axy.$$ Then $M_{\rm nc}\notin \Theta^2$ since 
the commutative Motzkin polynomial is not a (commutative) sum of squares \cite[p.~5]{Mar}. 
The fact that $M_{\rm nc}(A,B)$ has nonnegative trace for all symmetric matrices $A,B$ 
has been shown by Schweighofer and the second author \cite[Example 4.4]{ksconnes} using  
Putinar's
Positivstellensatz \cite{Put}.
\end{example}

Let $\Sigma_k^2:=\Sigma^2\cap \R \ax_{\leq 2k}$ and $\Theta_k^2:=\Theta^2\cap \R \ax_{\leq 2k}$.
These are convex cones in $\R \ax_{\leq 2k}$. 
By duality there exists a connection 
between $\Theta_k^2$ and positive semidefinite tracial moment matrices of order $k$. 
If every tracial moment matrix $M_k(y)\succeq0$ of order $k$ has a tracial representation 
then every trace-positive polynomial of degree at most $2k$ lies in $\Theta_k^2$. 
In fact:

\begin{thm}\label{thm:posdefmm}
	The following statements are equivalent:
	\begin{enumerate}[\rm (i)]
	\item all truncated tracial sequences $(y_ w)_{\leq 2k}$ with 
	{\rm{positive definite}} tracial moment matrix $M_k(y)$ have a tracial moment representation \eqref{rep};
	\item all trace-positive polynomials of degree $\leq2k$ are elements of $\Theta^2_k$.
	\end{enumerate}
\end{thm}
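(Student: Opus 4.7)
The plan is to prove both implications by Hahn--Banach separation in the finite-dimensional space $\R\ax_{\leq 2k}$, using the pairing $\langle f, y\rangle := \sum_w f_w y_w$. Two elementary inclusions from Lemma \ref{lem:mk} drive the argument: first, if $f \in \Theta^2_k$ with $f \csim \sum g_i^*g_i$ and $y$ is any tracial sequence with $M_k(y) \succeq 0$, then $\langle f,y\rangle = \sum_i \vv{g_i}^* M_k(y) \vv{g_i} \geq 0$; second, if $f$ is trace-positive and $y$ is a tracial moment sequence, then $\langle f,y\rangle = \sum_i \lambda_i \Tr(f(\ushort A^{(i)})) \geq 0$.

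For (i)~$\Rightarrow$~(ii), suppose by contradiction that $f \in \R\ax_{\leq 2k}$ is trace-positive but $f \notin \Theta^2_k$. Closedness of the convex cone $\Theta^2_k$ in $\R\ax_{\leq 2k}$ together with Hahn--Banach produces a linear functional $L$ with $L|_{\Theta^2_k} \geq 0$ and $L(f) < 0$. Because $\pm(pq-qp) \in \Theta^2_k$, $L$ vanishes on commutators and therefore gives rise to a tracial functional $y$; the inequality $L(g^*g) \geq 0$ translates to $M_k(y) \succeq 0$. To meet the hypothesis of~(i), I upgrade to strict positive definiteness by perturbation: fix once and for all a reference tracial moment sequence $y_0$ with $M_k(y_0) \succ 0$, obtained by setting $(y_0)_w := \Tr(w(\ushort A))$ for symmetric matrices $\ushort A$ of size large enough and in general position so that the evaluation map $\R\ax_{\leq k} \to \R^{t\times t}$ is injective. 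Since $1 \in \Theta^2_k$ gives $L(1) \geq 0$, for each small $\epsilon > 0$ the normalized combination $y_\epsilon := (y + \epsilon y_0)/(L(1)+\epsilon)$ is a truncated tracial sequence with $M_k(y_\epsilon) \succ 0$ and $\langle f, y_\epsilon \rangle < 0$. Applying (i) to $y_\epsilon$ yields $\langle f, y_\epsilon \rangle = \sum_j \mu_j \Tr(f(\ushort A^{(j)})) \geq 0$ by trace-positivity of $f$, contradicting the previous inequality.

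For (ii)~$\Rightarrow$~(i), let $y$ be a truncated tracial sequence with $M_k(y) \succ 0$. Let $T_k$ denote the (closed) cone of trace-positive polynomials of degree $\leq 2k$ and $\mathcal M_k$ the cone generated by truncated tracial moment sequences. The two elementary inclusions identify $T_k = \mathcal M_k^\circ$, since a functional nonnegative on every evaluation $\Tr(\,\cdot\,(\ushort A))$ is trace-positive. The bipolar theorem then gives $\overline{\mathcal M_k} = T_k^\circ$. Under hypothesis~(ii), $T_k = \Theta^2_k$, so $\overline{\mathcal M_k} = (\Theta^2_k)^\circ$, and an easy verification (using $\pm(pq-qp), g^*g \in \Theta^2_k$) identifies $(\Theta^2_k)^\circ = \{z : M_k(z) \succeq 0\}$. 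Since $M_k(y) \succ 0$, the sequence $y$ lies in the interior of this set, hence in the interior of $\overline{\mathcal M_k}$. Because the interior of a convex set coincides with the interior of its closure in finite dimensions, $y \in \mathcal M_k$, as required.

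The principal obstacle is the closedness of $\Theta^2_k$ in $\R\ax_{\leq 2k}$, which underpins both the Hahn--Banach separation and the bipolar computation; this is standard but warrants careful checking, especially given the quotient by cyclic equivalence implicit in the definition of $\Theta^2$. A secondary technical point is the explicit construction of the reference sequence $y_0$ with strictly positive definite $M_k(y_0)$, which powers the perturbation in the $(\mathrm{i}) \Rightarrow (\mathrm{ii})$ direction.
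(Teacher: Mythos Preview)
Your argument is correct and follows essentially the same route as the paper's. For (i)~$\Rightarrow$~(ii) both proofs separate $f$ from the closed cone $\Theta^2_k$ (this is Lemma~\ref{lem:thetaclosed}) and then perturb the separating functional to achieve a strictly positive definite moment matrix before invoking (i); your perturbation by a reference sequence $y_0$ with $M_k(y_0)\succ 0$ is exactly the paper's ``since there are tracial states strictly positive on $\Sigma^2_k\setminus\{0\}$, we may assume $L(p)>0$''. For (ii)~$\Rightarrow$~(i) the paper factors through the independently proved Theorem~\ref{thm:Lrep} (strictly positive Riesz functionals have tracial representations) together with Lemma~\ref{lem:Linner}; your explicit bipolar computation $\overline{\mathcal M_k}=T_k^\circ=(\Theta^2_k)^\circ=\{z:M_k(z)\succeq 0\}$ followed by the interior argument is precisely the content of those two results, just inlined. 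The one place to be slightly more careful than you indicate is the ambient space for the dual cones: the identification $(\Theta^2_k)^\circ=\{z:M_k(z)\succeq 0\}$ and the ``interior'' step should be read inside the subspace of tracial sequences (those satisfying both \eqref{cyc} and \eqref{cycstar}), since $\pm(pq-qp)\in\Theta^2_k$ forces cyclic invariance but not $*$-invariance; the paper handles this with the phrase ``without loss of generality, $L$ is tracial''.
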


For the proof we need some preliminary work.
\begin{lemma}\label{lem:thetaclosed}
	$\Theta_k^2$ is a closed convex cone in $\R \ax_{\leq 2k}$.
\end{lemma}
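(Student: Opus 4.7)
The convex-cone part is immediate from the definition: cyclic equivalence is linear, $\lambda g^*g=(\sqrt\lambda\,g)^*(\sqrt\lambda\,g)$ for $\lambda\geq 0$, and concatenating cyclic-SoHS certificates gives closure under sums; all outputs stay in $\R\ax_{\leq 2k}$. For closedness of $\Theta_k^2$ inside the finite-dimensional space $\R\ax_{\leq 2k}$, my plan is to recast $\Theta_k^2$ as the preimage of the image of the PSD cone under a linear map and then run a normalization argument.

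The key device will be a Gram-matrix encoding. I enumerate the words of degree $\leq k$ as $w_1,\dots,w_\eta$ (so $\eta=\dim\R\ax_{\leq k}$) and set
\[
\Phi(G):=\sum_{i,j=1}^\eta G_{ij}\,w_i^*w_j\in\R\ax_{\leq 2k},\qquad G\in\sym\R^{\eta\times\eta}.
\]
I then aim to prove that $f\in\Theta_k^2$ if and only if $f\csim\Phi(G)$ for some $G\succeq 0$. One direction is easy: if $G=\sum_\ell v_\ell v_\ell^T\succeq 0$ and $g_\ell:=\sum_i(v_\ell)_i w_i$, then $\Phi(G)=\sum_\ell g_\ell^*g_\ell\in\Sigma_k^2$. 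The converse requires the (nontrivial) degree-reduction fact that any cyclic-SoHS certificate of a polynomial of degree $\leq 2k$ can be rearranged so that all squares have degree $\leq k$; this is the principal obstacle I expect, and I would either cite it from \cite{ksconnes}-style work or establish it separately via a leading-term argument on the $g_i^*g_i$.

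Granted this characterization, closedness follows from a normalization argument. I take $f_m\in\Theta_k^2$ with $f_m\to f$, pick $G_m\succeq 0$ satisfying $f_m\csim\Phi(G_m)$, and claim $\|G_m\|$ stays bounded. If not, setting $G_m':=G_m/\|G_m\|$ gives PSD matrices of unit norm, so along a subsequence $G_m'\to G'\succeq 0$ with $\|G'\|=1$. Passing to the limit in the continuous quotient $\R\ax_{\leq 2k}\to\R\ax_{\leq 2k}/\{p\csim 0\}$ and using continuity of $\Phi$ yields $\Phi(G')\csim 0$. By Remark \ref{rem:csim}(b), $\Tr\bigl(\Phi(G')(\ushort A)\bigr)=0$ for every symmetric tuple $\ushort A$; writing $G'=\sum_\ell v'_\ell(v'_\ell)^T$ and $g'_\ell:=\sum_i(v'_\ell)_i w_i$, this reads $\sum_\ell\Tr\bigl(g'_\ell(\ushort A)^*g'_\ell(\ushort A)\bigr)=0$, a sum of nonnegative terms, so every $g'_\ell$ vanishes on all symmetric matrix tuples and hence is the zero polynomial. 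Linear independence of distinct words forces every $v'_\ell=0$ and so $G'=0$, contradicting $\|G'\|=1$. With $\|G_m\|$ bounded, a convergent subsequence $G_m\to G\succeq 0$ gives $f\csim\Phi(G)$, so $f\in\Theta_k^2$.
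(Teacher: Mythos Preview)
Your argument is correct. The route differs from the paper's mainly in packaging: the paper works in the quotient $\R\ax_{\leq 2k}/_{\csim}$, uses Carath\'eodory's theorem to bound the number of hermitian squares by $d_k+1$, and then shows that the image of the unit sphere in $(\R\ax_{\leq k})^{d_k+1}$ under $(g_i)\mapsto\pi(\sum g_i^*g_i)$ is compact and misses $0$ (citing \cite[Lemma 3.2(b)]{ksbmv} for the last point), whence the cone over it is closed. Your Gram-matrix encoding $\Phi$ plays the same role as the Carath\'eodory-bounded tuple, and your normalization argument is the standard equivalent of the paper's cone-over-compact-base argument. What your version buys is that you prove the key fact ``a sum of hermitian squares cyclically equivalent to $0$ must vanish'' directly via the trace-nonnegativity computation, rather than importing it from \cite{ksbmv}. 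Note also that this same computation closes the gap you flag as the ``principal obstacle'': the degree-reduction fact (equivalently, $\pi(\Sigma_k^2)=\pi(\Theta_k^2)$, which the paper states without elaboration) follows by applying your trace argument to the top-degree homogeneous component of $\sum g_i^*g_i$ when $\max\deg g_i>k$, since cyclic equivalence is graded. So your proof can be made fully self-contained with no extra work.
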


\begin{proof}
Endow $\R\ax_{\leq 2k}$ with a norm 
$\|$\textvisiblespace $\|$ and  the quotient space $\R \ax_{\leq 2k}/_{\csim}$ 
with the quotient norm
\begin{equation}\label{eq:qnorm}
\| \pi(f) \| := \inf \big\{ \| f+h \| \mid h\csim 0\big\}, \quad
f\in\R\ax_{\leq 2k}.
\end{equation}
Here $\pi:\R\ax_{\leq 2k}\to \R \ax_{\leq 2k}/_{\csim}$ denotes 
the quotient map. (Note: due to the finite-dimensionality of $\R\ax_{\leq 2k}$,
the infimum on the right-hand side of \eqref{eq:qnorm} is attained.)		

Since $\Theta_k^2= \pi^{-1} \big( \pi(\Theta_k^2)\big)$, it suffices
to show that $\pi(\Theta_k^2)$ is closed.
Let $d_k=\dim \R \ax_{\leq 2k}$. Since by Carath\`eodory's theorem \cite[p.~10]{bar} each element
	$f\in \R \ax_{\leq 2k}$ can be written as a convex combination of $d_k+1$ elements
	of $\R \ax_{\leq 2k}$, the image of
\begin{align*}
\varphi:\left(\R \ax_{\leq k}\right)^{d_k}
&\to
\R \ax_{2k}/_{\csim}\\
(g_i)_{i=0,\dots,d_k}
&\mapsto 
\pi\big(\sum_{i=0}^{d_k}g_i^*g_i\big)
\end{align*}
equals $\pi(\Sigma^2_k)=\pi(\Theta_k^2)$. In $\left(\R \ax_{\leq k}\right)^{d_k}$ we define 
$\mathcal S:=\{g=(g_i)\mid \|g\|=1\}$. Note that $\mathcal S$ is compact, thus 
$V:=\varphi(\mathcal S)\subseteq \pi(\Theta_k^2)$ is compact as well. 
Since $0\notin \mathcal S$,
and a sum of hermitian squares cannot be cyclically equivalent to $0$ by 
\cite[Lemma 3.2 (b)]{ksbmv}, we see that
$0\notin V$.

Let $(f_\ell)_\ell$ be a sequence in $\pi(\Theta^2_k)$ which converges to $\pi(f)$ 
for some $f\in\R \ax_{\leq 2k}$. 
Write $f_\ell=\lambda_\ell v_\ell$ for $\lambda_\ell\in\R_{\geq 0}$ and $v_\ell\in V$. 
Since $V$ is compact there exists a subsequence $(v_{\ell_j})_j$ of $v_\ell$ converging 
to $v\in V$. Then
$$\lambda_{\ell_j}=\frac{\|f_{\ell_j}\|}{\|v_{\ell_j}\|}\stackrel{j\rightarrow \infty}{\longrightarrow }\frac{\|f\|}{\|v\|}.$$ 
Thus $f_\ell\rightarrow f=\frac{\|f\|}{\|v\|}v\in\pi(\Theta^2_k)$.
\end{proof}

\begin{dfn}
To a truncated tracial sequence $(y_ w)_{\leq k}$ we
associate
the \emph{$($tracial$)$ Riesz functional} $L_y:\R \ax_{\leq k}\to\R$ defined by
$$L_y(p):=\sum_ w p_ w y_ w\quad\text{for } p=\sum_ w p_ w  w\in \R\ax_{\leq k}.$$
We say that $L_y$ is \emph{strictly positive} ($L_y>0$), if 
$$L_y(p)>0 \text{ for all trace-positive } p\in\R \ax_{\leq k},\, p\ncsim 0.$$ 
If $L_y(p)\geq0$ for all trace-positive $p\in\R \ax_{\leq k}$, then
$L_y$ is \emph{positive} ($L_y\geq0$).
\end{dfn}

Equivalently, a tracial Riesz functional $L_y$
is positive (resp., strictly positive) if and only if the map 
$\bar L_y$ it induces on $ \R \ax_{\leq 2k}/_{\csim}$ is 
nonnegative (resp., positive) on 
the nonzero images of trace-positive polynomials in $ \R \ax_{\leq 2k}/_{\csim}$.

We shall prove that strictly positive Riesz functionals lie in the interior of the cone
of positive Riesz functionals,
and that truncated tracial sequences $y$ with \emph{strictly}
positive $L_y$ are truncated tracial moment sequences (Theorem \ref{thm:Lrep} below). 
These results are motivated by and resemble the 
results of Fialkow and Nie 
\cite[Section 2]{fnie} in the commutative context.

\begin{lemma}\label{lem:Linner}
	If $L_y>0$ then there exists an $\eps>0$ such that $L_{\tilde y}>0$ for all
	$\tilde y$ with $\|y-\tilde y\|_1<\eps$.
\end{lemma}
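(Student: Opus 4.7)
The plan is to show that the set of strictly positive tracial Riesz functionals is open among all tracial Riesz functionals, by means of a compactness argument carried out in the quotient space $\R\ax_{\leq 2k}/_\csim$. The motivation for passing to the quotient is that the cone of trace-positive polynomials is closed under cyclic equivalence, but individual equivalence classes are unbounded in $\R\ax_{\leq 2k}$; modding out by $\csim$ turns the relevant test set into a compact one.

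First I would verify that every Riesz functional $L_y$ factors through $\pi\colon \R\ax_{\leq 2k}\to \R\ax_{\leq 2k}/_\csim$ to give a well-defined $\bar L_y$. This is immediate from the tracial sequence condition \eqref{cyc}: any commutator $uv-vu$ satisfies $L_y(uv-vu)=y_{uv}-y_{vu}=0$. Next I would show that the cone $T_k\subseteq \R\ax_{\leq 2k}$ of trace-positive polynomials is closed (as an intersection of the closed half-spaces $\{p\mid \Tr(p(\ushort A))\geq 0\}$ over all tuples $\ushort A\in(\sym\R^{t\times t})^n$, $t\in\N$) and saturated under $\csim$ (since $h\csim 0$ implies $\Tr(h(\ushort A))=0$). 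Consequently $\pi(T_k)$ is closed in $\R\ax_{\leq 2k}/_\csim$. Fixing the $\ell^\infty$-norm $\|\cdot\|_\infty$ on coefficients in $\R\ax_{\leq 2k}$ and the induced quotient norm from \eqref{eq:qnorm}, the intersection $\pi(T_k)\cap S$ with the unit sphere $S$ is closed and bounded, hence compact by finite-dimensionality. The hypothesis $L_y>0$ means $\bar L_y$ is continuous and strictly positive on $\pi(T_k)\setminus\{0\}$, so it attains a positive minimum
$$\delta := \min_{\xi\in\pi(T_k)\cap S} \bar L_y(\xi)>0.$$

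For the perturbation bound, given $p\in\R\ax_{\leq 2k}$ I pick a representative $p^\circ$ of the class $\pi(p)$ attaining $\|p^\circ\|_\infty = \|\pi(p)\|$; this is possible by finite-dimensionality (as already used in the definition \eqref{eq:qnorm}). Then
$$|L_{\tilde y}(p)-L_y(p)| = |L_{\tilde y}(p^\circ)-L_y(p^\circ)| = \Big|\sum_w(\tilde y_w - y_w)p^\circ_w\Big| \leq \|p^\circ\|_\infty \|y-\tilde y\|_1 = \|\pi(p)\|\cdot\|y-\tilde y\|_1.$$
Choosing $\eps := \delta/2$, for every $\tilde y$ with $\|y-\tilde y\|_1<\eps$ and every $\xi\in\pi(T_k)\cap S$ we obtain $\bar L_{\tilde y}(\xi)\geq \delta-\eps = \delta/2>0$; homogeneity extends this to all of $\pi(T_k)\setminus\{0\}$, yielding $L_{\tilde y}>0$. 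The main obstacle is conceptual rather than computational: recognising that the natural arena for the compactness argument is the quotient $\R\ax_{\leq 2k}/_\csim$ equipped with \eqref{eq:qnorm}, where the set of unit-norm trace-positive classes is compact and the difference of Riesz functionals is controlled uniformly by $\|y-\tilde y\|_1$ via an optimal representative.
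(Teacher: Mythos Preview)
Your proof is correct and follows essentially the same route as the paper's: pass to the quotient $\R\ax_{\leq 2k}/_{\csim}$, use compactness of the unit sphere intersected with the image of the trace-positive cone to get a positive lower bound for $\bar L_y$, and control $\bar L_{\tilde y}-\bar L_y$ uniformly in terms of $\|y-\tilde y\|_1$. You are in fact more explicit than the paper in two places---you verify the closedness of $\pi(T_k)$ and you extract the sharp Lipschitz constant $1$ via the $\ell^\infty$/$\ell^1$ pairing and an optimal representative---whereas the paper simply invokes linearity between finite-dimensional spaces to obtain some constant $C$.
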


\begin{proof}
We equip $\R \ax_{\leq 2k}/_{\csim}$ with a quotient norm as in  \eqref{eq:qnorm}.
Then $$\mathcal S:=\{\pi(p)\in \R \ax_{\leq 2k}/_{\csim}\mid p\in\mathcal C_k,\;\|\pi(p)\|=1\}$$ is compact.
By a scaling argument, it suffices to show that $\bar L_{\tilde y}>0$ on $\mathcal S$ for $\tilde y$ close to $y$. 
The map $y\mapsto \bar L_y$ is linear between finite-dimensional vector spaces.
Thus 
$$|\bar L_{y'}(\pi(p))-\bar L_{y''}(\pi(p))|\leq C \|y'-y''\|_1$$ for all $\pi(p)\in \mathcal S$, 
truncated tracial moment sequences $y',y''$, and some $C\in\R_{>0}$.

Since $\bar L_y$ is continuous and strictly positive on $\mathcal S$,
 there exists an $\varepsilon>0$ such 
that $\bar L_y(\pi(p))\geq2\varepsilon$ for all $\pi(p)\in \mathcal S$. 
Let $\tilde y$ satisfy $\|y-\tilde y\|_1<\frac {\eps}C$.
Then
\[\bar L_{\tilde y}(\pi(p))\geq \bar L_y(\pi(p))-C \|y-\tilde y\|_1\geq\varepsilon>0. \hfill\qedhere \]
\end{proof}

\begin{thm}\label{thm:Lrep}
	Let $y=(y_ w)_{\leq k}$ be a truncated tracial sequence of order $k$.
	If $L_y>0$, then $y$ is  a truncated tracial moment sequence.
\end{thm}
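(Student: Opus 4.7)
The approach is a bipolar-duality argument in finite dimensions. Let $V := \sym\R\ax_{\leq k}/_{\csim}$ with quotient map $\pi$; this is a finite-dimensional real vector space on which the Riesz functional of any order-$k$ tracial sequence descends to a linear functional $\bar L_y \in V^*$ (using $y_w = y_{w^*}$ together with \eqref{cyc}). Let $\mathcal{M}_k^+ \subseteq V^*$ denote the convex cone of \emph{un-normalized} truncated tracial moments, generated by the single-atom functionals $\bar L_{y^{(\ushort A)}}$ with $y_w^{(\ushort A)} := \Tr(w(\ushort A))$, and let $\pi(C) \subseteq V$ denote the image of the cone $C$ of trace-positive symmetric polynomials of degree $\leq k$. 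The plan is to prove $\bar L_y \in \mathcal{M}_k^+$; the normalization $y_\emptyset = 1$ then forces $\sum_i \lambda_i = 1$, yielding \eqref{rep}.

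The first step is to establish the bipolar identity $\overline{\mathcal{M}_k^+} = (\pi(C))^*$. Two ingredients are needed: (i) $\pi(C)$ is a closed convex cone in $V$, because trace-positivity is a pointwise closed condition and $C$ is $\csim$-invariant, so $\pi(C)$ is saturated and closedness descends from $\sym\R\ax_{\leq k}$; (ii) $\mathcal{M}_k^+$ has full affine dimension in $V^*$. For (ii) the key input is the Klep--Schweighofer trace vanishing theorem (Remark \ref{rem:csim}(b)): any $\pi(p) \in V$ annihilated by every $\bar L_{y^{(\ushort A)}}$ satisfies $\Tr(p(\ushort A)) = 0$ for all symmetric tuples $\ushort A$, hence $p \csim 0$, i.e., $\pi(p) = 0$. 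The standard bipolar theorem then yields $\overline{\mathcal{M}_k^+} = (\pi(C))^*$, and since $L_y > 0$ implies $L_y \geq 0$ on trace-positive polynomials, $\bar L_y \in \overline{\mathcal{M}_k^+}$.

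The second step rules out the boundary. Suppose for contradiction that $\bar L_y \in \partial\overline{\mathcal{M}_k^+}$. As $\overline{\mathcal{M}_k^+}$ is a closed convex cone in the finite-dimensional space $V^*$, there exists a supporting hyperplane at $\bar L_y$, i.e., some nonzero $\pi(p) \in V$ with $L_z(p) \geq 0$ for all $z \in \overline{\mathcal{M}_k^+}$ and $L_y(p) = 0$. Specializing $z = y^{(\ushort A)}$ shows that $p$ is trace-positive, and then $L_y(p) = 0$ together with $L_y > 0$ forces $p \csim 0$, contradicting $\pi(p) \neq 0$. Hence $\bar L_y \in \inte(\overline{\mathcal{M}_k^+})$.

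Finally, since $\mathcal{M}_k^+$ is convex with nonempty interior (by (ii) above), a standard convex-analysis fact gives $\inte(\overline{\mathcal{M}_k^+}) = \inte(\mathcal{M}_k^+)$, so $\bar L_y \in \mathcal{M}_k^+$ and the desired tracial moment representation follows. The main technical hurdle is the dimension count in step one; without the Klep--Schweighofer trace vanishing theorem, $\mathcal{M}_k^+$ could be confined to a proper subspace of $V^*$ and the bipolar identification would fail to pin down $\bar L_y$ on the nose.
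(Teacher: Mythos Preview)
Your proof is correct and follows essentially the same strategy as the paper: a separation/duality argument places $\bar L_y$ in the closure of the moment cone, strict positivity then forces $\bar L_y$ into the interior, and finally $\inte(\overline{\mathcal M_k^+})\subseteq \mathcal M_k^+$ (for a convex cone with nonempty interior) finishes.

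The organization differs in two places worth noting. For the interior step, the paper proves a separate perturbation lemma (Lemma~\ref{lem:Linner}) showing $L_{\tilde y}>0$ for all $\tilde y$ near $y$, and then reruns the closure argument on each such $\tilde y$; you instead take a supporting hyperplane at a putative boundary point and obtain a trace-positive $p$ with $\pi(p)\neq 0$ and $L_y(p)=0$, contradicting $L_y>0$ directly. These are dual versions of the same step. More substantively, you are explicit about the full-dimension requirement for $\mathcal M_k^+$ and supply it via the trace-vanishing result of Remark~\ref{rem:csim}(b); the paper's proof invokes $\inte(\overline T)\subseteq T$ (citing \cite{ber}) without isolating why $T$ spans the ambient space of tracial sequences, so your treatment of this point is more careful. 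Working in the quotient $V=\sym\R\ax_{\leq k}/_{\csim}$ rather than in $\R^\eta$ is what makes the dimension count transparent.
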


\begin{proof}
We show first that 
$y\in \overline T$, where $\overline T$ is the closure of
$$T=\big\{(y_ w)_{\leq k}\mid  \exists \ushort A^{(i)}\;\exists \lambda_i\in \R_{\geq0} :\; y_ w=\sum \lambda_i\Tr( w(\ushort A^{(i)}))\big\}.$$

Assume $L_y>0$ but $y\notin \overline T$. Since $\overline T$ is a closed 
convex cone in $\R^\eta$ (for some $\eta\in \N$), by the Minkowski separation 
theorem there exists a vector $\vv{p}\in \R^\eta$ such that $\vv{p}^*y<0$ 
and $\vv{p}^*w\geq 0$ for all $w\in \overline T$. The non-commutative 
polynomial corresponding to $\vv{p}$ is
trace positive since $\vv{p}^*z\geq 0$ for all $z\in \overline T$. Thus
$0<L_y(p)=\vv{p}^*y<0$, a contradiction.	

By Lemma \ref{lem:Linner}, $y\in\inte(\overline T)$. Thus $y\in \inte (\overline T)\subseteq T$
\cite[Theorem 25.20]{ber}.
\end{proof}

We remark that assuming only non-strict positivity of $L_y$ in Theorem \ref{thm:Lrep}
would not suffice for the existence of a tracial moment representation \eqref{rep}
for $y$. This is a consequence of Example \ref{expsd}.

\begin{proof}[Proof $(\!$of Theorem {\rm\ref{thm:posdefmm}}$)$]
To show (i) $\Rightarrow$ (ii), assume $f=\sum_ w f_ w  w
\in\R\ax_{\leq 2k}$ is 
trace-positive but $f\notin \Theta^2_k$. 
By Lemma \ref{lem:thetaclosed}, $\Theta_k^2$ is a closed convex cone in $\R\ax_{\leq 2k}$, thus
by the Minkowski separation theorem we find a hyperplane which 
separates $f$ and $\Theta_k^2$. That is, there is a linear form 
$L:\R\ax_{\leq 2k}\to\R$ such that $L(f)<0$ and $L(p)\geq0$ 
for $p\in \Theta_k^2$. In particular, $L(f)=0$ for all $f\csim 0$, i.e.,
without loss of generality, $L$ is tracial. 
Since there are tracial states strictly positive on $\Sigma^2_k\setminus\{0\}$, we may assume $L(p)>0$ 
for all $p\in \Theta_k^2$, $p\ncsim 0$.
Hence
the bilinear form given by $$(p,q)\mapsto L(pq)$$ can be written as  
$ L(pq)={\vv q}^*M\vv{p}$ for some truncated tracial moment matrix $M\succ0$. 
By assumption, the corresponding truncated tracial sequence
$y$ has a tracial moment representation $$y_ w=\sum \lambda_i \Tr( w(\ushort A^{(i)}))$$
for some tuples $A^{(i)}$ of symmetric matrices $A_j^{(i)}$ and $\lambda_i\in \R_{\geq0}$ 
which implies the contradiction
$$0>L(f)=\sum \lambda_i \Tr(f(\ushort A^{(i)}))\geq 0.$$ 

Conversely, if (ii) holds,
then $L_y>0$ if and only if $M(y)\succ0$. Thus a positive definite moment matrix $M(y)$
defines a strictly positive functional $L_y$ which by Theorem \ref{thm:Lrep} has a tracial
representation. 
\end{proof}

As mentioned above, the Motzkin polynomial $M_{\rm nc}$
is trace-positive but $M_{\rm nc}\notin \Theta^2$. Thus by Theorem \ref{thm:posdefmm}
there exists at least one truncated tracial moment matrix which is positive definite but has  
no tracial representation. 
\begin{example}
Taking the index set 
$$(1,X,Y,X^2,XY,YX,Y^2,X^2Y,XY^2,YX^2,Y^2X,X^3,Y^3,XYX,YXY),$$
 the 
matrix 
$$M_3(y):=\left(\begin{smallmatrix}
1 & 0  & 0 & \frac74 & 0 & 0 & \frac74 & 0 & 0 & 0 & 0 & 0 & 0 & 0 & 0 \\ 
0 & \frac74 & 0 & 0 & 0 & 0 & 0 & 0 & \frac{19}{16} & 0 & \frac{19}{16} & \frac{21}4 & 0 & 0 & 0 \\ 
0 & 0 & \frac74 & 0 & 0 & 0 & 0 & \frac{19}{16} & 0 & \frac{19}{16} & 0 & 0 & \frac{21}4 & 0 & 0 \\ 
\frac74 & 0 & 0 & \frac{21}4 & 0 & 0 & \frac{19}{16} & 0 & 0 & 0 & 0 & 0 & 0 & 0 & 0 \\ 
0 & 0 & 0 & 0 & \frac{19}{16} & 0 & 0 & 0 & 0 & 0 & 0 & 0 & 0 & 0 & 0 \\ 
0 & 0 & 0 & 0 & 0 & \frac{19}{16} & 0 & 0 & 0 & 0 & 0 & 0 & 0 & 0 & 0 \\ 
\frac74 & 0 & 0 & \frac{19}{16} & 0 & 0 &\frac{21}4 & 0 & 0 & 0 & 0 & 0 & 0 & 0 & 0 \\ 
0 & 0 & \frac{19}{16} & 0 & 0 & 0 & 0 & \frac{9}8 & 0 & \frac{5}6 & 0 & 0 & \frac{9}8 & 0 & 0 \\ 
0 & \frac{19}{16} & 0 & 0 & 0 & 0 & 0 & 0 & \frac{9}8 & 0 & \frac{5}6 & \frac{9}8 & 0 & 0 & 0 \\ 
0 & 0 & \frac{19}{16} & 0 & 0 & 0 & 0 & \frac{5}6 & 0 & \frac{9}8 & 0 & 0 & \frac{9}8 & 0 & 0 \\ 
0 & \frac{19}{16} & 0 & 0 & 0 & 0 & 0 & 0 & \frac{5}6 & 0 & \frac{9}8 & \frac{9}8 & 0 & 0 & 0 \\ 
0 & \frac{21}4 & 0 & 0 & 0 & 0 & 0 & 0 & \frac{9}8 & 0 & \frac{9}8 & 51 & 0 & 0 & 0 \\ 
0 & 0 & \frac{21}4 & 0 & 0 & 0 & 0 & \frac{9}8 & 0 & \frac{9}8 & 0 & 0 & 51 & 0 & 0 \\ 
0 & 0 & 0 & 0 & 0 & 0 & 0 & 0 & 0 & 0 & 0 & 0 & 0 & \frac{5}6 & 0 \\ 
0 & 0 & 0 & 0 & 0 & 0 & 0 & 0 & 0 & 0 & 0 & 0 & 0 & 0 & \frac{5}6
\end{smallmatrix} \right)$$
is a tracial moment matrix of degree 3 in 2 variables and is positive definite.
But $$L_y(M_{\rm nc})=M_{\rm nc}(y)=-\frac5{16}<0.$$ Thus $y$ 
is not a truncated tracial moment sequence,
since otherwise $L_y(p)\geq 0$ for all trace-positive polynomials $p\in \R\axy_{\leq 6}$.

On the other hand, the (free) non-commutative moment problem is always
solvable for positive definite moment matrices \cite[Theorem 2.1]{McC}.
In our example this means
there are symmetric matrices $A,B\in\R^{15\times 15}$ and a vector
$v\in\R^{15}$ such that
$$y_ w=\langle w(A,B)v,v\rangle$$
for all $ w\in\axy_{\leq 3}$.
\end{example}

\begin{remark}
A trace-positive polynomial $f\in \R \ax$ of degree $2k$ lies in $\Theta^2_k$ if
and only if $L_y(f)\geq 0$ for all truncated tracial sequences $(y_w)_{\leq 2k}$ with
 $M_k(y)\succeq0$. 
This condition is obviously satisfied if all truncated tracial sequences $(y_w)_{\leq 2k}$ with 
$M_k(y)\succeq0$ have a tracial representation. 

Using this we can prove that trace-positive binary quartics, i.e., 
homogeneous polynomials of degree $4$ in $\R \langle X,Y\rangle$, lie in $\Theta_2^2$.
Equivalently, truncated tracial sequences $(y_w)$ indexed by words of degree $4$ with a 
positive definite tracial 
moment matrix have a tracial moment representation.

Furthermore,
trace-positive binary biquadratic polynomials, i.e., polynomials $f\in \R \axy$ with 
$\deg_X f, deg_Y f\leq 2$,
are cyclically equivalent to a sum of hermitian squares. 
Example \ref{expsd} then shows that a polynomial $f$ can satisfy $L_y(f)\geq 0$ although there 
are  truncated tracial sequences $(y_w)_{\leq 2k}$ with $M_k(y)\succeq0$ and no 
tracial representation.

Studying extremal points of the convex cone $$\{(y_w)_{\leq 2k}\mid M_k(y)\succeq 0\}$$ 
of truncated tracial sequences with positive semidefinite tracial moment matrices, we are able 
to impose a concrete block structure on the matrices needed in a tracial moment representation. 

These statements and concrete sum of hermitian squares and commutators representations of trace-positive polynomials 
of low degree will be published elsewhere \cite{sb}. 
\end{remark}

\appendix
\section{Proofs of the claims made in Examples \ref{exconv} and \ref{expsd}}
\subsection*{Example \ref{exconv} revisited}

We take the index set $J=(1,X,X^2,X^3,X^4)$ and $y=(1,1-\sqrt2,1,1-\sqrt2,1)$. Then
there is no symmetric matrix $A\in \R^{t\times t}$ for any $t\in \N$ such that 
\begin{equation}\label{eq:meanRep}
y_ w=\Tr( w(A))\quad \text{for all } w\in J.
\end{equation}

Without loss of generality we can choose $A$ to be diagonal with diagonal elements 
$a_1,\dots, a_t$. 
Then $y_ w=\Tr( w(A))$ if and only if the following equations hold:
\begin{align}
\sum_{i=1}^t a_i&=\sum_{i=1}^t a_i^3=(1-\sqrt2)t,  \label{eq:mean1} \\
 \sum_{i=1}^t a_i^2&= \sum_{i=1}^t a_i^4=t \label{eq:mean2}.
\end{align}
In the general means inequality 
$$\frac{\sum_{i=1}^t x_i}{t}\geq\sqrt{\frac{\sum_{i=1}^t x_i^2}{t}}$$ for the arithmetic 
and the quadratic mean of $x=(x_1,\dots, x_t)\in\R_{\geq0}^t$, 
equality holds if and only if all the $x_i$ are the same. 
Hence \eqref{eq:mean2} rewritten as
$$\frac{\sum a_i^2}{t}=1=\sqrt{\frac{\sum a_i^4}{t}},$$ gives $a_1^2=\dots=a_t^2=1$. 
Therefore, $$\sum_{i=1}^t a_i=\sum_{i=1}^t a_i^3\in \Z.$$ Since $(1-\sqrt 2)t\notin \Z$, 
this contradicts
\eqref{eq:mean2} and there is no representation \eqref{eq:meanRep} of $y$.

\subsection*{Example \ref{expsd} revisited}
The truncated tracial moment matrix 
$$M_2(y)=\left(\begin{smallmatrix}
	1&0&0&1&1&1&1\\ 0&1&1&0&0&0&0\\ 0&1&1&0&0&0&0\\ 1&0&0&4&0&0&2\\
	1&0&0&0&2&1&0\\ 1&0&0&0&1&2&0\\ 1&0&0&2&0&0&4
\end{smallmatrix}\right)$$
is positive semidefinite but 
with respect to the index set $$(1,X,Y,X^2,XY,YX,Y^2),$$ $y$ has no tracial moment representation \eqref{rep}.
 
Assume $y_ w=\sum_{i=1}^N \lambda_i \Tr( w(A_1^{(i)},A_2^{(i)})) $ for some symmetric matrices $A_j^{(i)}$ and
$\lambda_i\in\R_{\geq0}$ with $\sum_i \lambda_i=1$.
Setting $$T^{(i)}:=\big(\Tr(u^*v( A_1^{(i)},A_2^{(i)}))\big)_{u,v}$$ we have 
$M_2(y)=\sum_{i=1}^N \lambda_i T^{(i)}$. Each $T^{(i)}$ is positive semidefinite, 
thus in particular $ T^{(i)}_{22}= T^{(i)}_{33}= T^{(i)}_{23}=:t_i$ holds for all $i=1,\dots,N$.
Let $d_i$ be the size of the symmetric matrices $A_j^{(i)}$, $j=1,2$. From
\begin{align*}
\frac1{d_i^2}\langle A_1^{(i)},A_1^{(i)}\rangle\langle A_2^{(i)},A_2^{(i)}\rangle&=
\Tr({A_1^{(i)}}^2)\Tr({A_2^{(i)}}^2)=t_i^2\\
&=(\Tr(A_1^{(i)}A_2^{(i)}))^2=\frac1{d_i^2}\langle A_1^{(i)},A_2^{(i)}\rangle^2
\end{align*}
we obtain by the Cauchy-Schwarz inequality that $A_1^{(i)}=\alpha_i A_2^{(i)}$ for some $\alpha_i\in \R$, 
$i=1,\dots,N$. But then we derive the contradiction
\begin{align*}
2&=M_2(y)_{55}=\sum_{i=1}^N\lambda_i T^{(i)}_{55}=\sum \lambda_i \Tr({A_1^{(i)}}^2{A_2^{(i)}}^2)=
	\sum \lambda_i\alpha_i^2\Tr({A_2^{(i)}}^4)\\&
	=\sum \lambda_i \Tr(A_1^{(i)}A_2^{(i)}A_1^{(i)}A_2^{(i)})=M_2(y)_{45}=1.
\end{align*}

\subsection*{Acknowledgments}
\small
Both authors thank Markus Schweighofer for insightful comments and suggestions. 
The second author also thanks Scott McCullough and Jiawang Nie for
enlightening discussions.

\end{document}